\theoremstyle{plain}
 \newtheorem{theorem}{Theorem}[section]
 \newtheorem*{theorem*}{Theorem}
 \newtheorem*{lemma*}{Lemma}
 \newtheorem{proposition}[theorem]{Proposition}
 \newtheorem{fact*}{Fact}
 \newtheorem{lemma}[theorem]{Lemma}
\theoremstyle{definition}
 \newtheorem*{remark*}{Remark}
 \newtheorem{example}[theorem]{Example}
\numberwithin{equation}{section}
\numberwithin{figure}{section}
\newcommand{\red}[1]{\textcolor{black}{#1}}
\newcommand{\vect}[1]{\boldsymbol{#1}}
\newcommand{\R}{\boldsymbol{R}}
\newcommand{\rank}{\operatorname{rank}}
\newcommand{\Hess}{\operatorname{Hess}}
\renewcommand{\phi}{\varphi}
\newcommand{\inner}[2]{\left({#1},{#2}\right)}
\newcommand{\innera}[2]{\left\langle{#1},{#2}\right\rangle}
\renewcommand{\span}{\operatorname{span}}
\newcommand{\zv}{\vect{0}}
\renewcommand{\dfrac}{\displaystyle\frac}
\newcommand{\pmt}[1]{{\begin{pmatrix} #1  \end{pmatrix}}}
\newcommand{\trans}[1]{{\vphantom{#1}}^t{#1}}
\renewcommand{\section}{%
  \@startsection{section}
   {1}
   {\z@}
   {-3.5ex \@plus -1ex \@minus -.2ex}
   {2.3ex \@plus.2ex}
   {\normalfont\normalsize\bfseries}
}%
\renewcommand{\subsection}{%
  \@startsection{subsection}
   {1}
   {\z@}
   {-3.5ex \@plus -1ex \@minus -.2ex}
   {2.3ex \@plus.2ex}
   {\normalfont\normalsize\bfseries}
}%
\begin{document}
\begin{center}
{\large
{\bf
Characterizing singularities of a surface in Lie sphere geometry}
}\\
Mason Pember,
Wayne Rossman,
Kentaro Saji
and
Keisuke Teramoto\\
\today
\end{center}
\begin{abstract}
The conditions for a cuspidal edge, swallowtail
and other fundamental singularities
are given in the context of Lie sphere geometry. 
We then use these conditions to
study the Lie sphere transformations of a surface. 

\end{abstract}

\section{Introduction}

Lie sphere geometry is the study of spheres in spaceforms and their 
tangential 
contact.  It was first developed by 
Sophus Lie \cite{lie} using his hexaspherical 
coordinate model.  This model has been utilized by 
Blaschke \cite{blaschke}
to study applicable surfaces in Lie sphere geometry, 
and the recent interest in 
integrable systems has revived this approach 
(see, for example, \cite{clarke, 
ferapontov1, ferapontov2, mussonic, mason}).  Spacelike linear 
Weingarten surfaces with singularities 
in spaceforms can be characterized in this setting 
(\cite{BHR1, BHR2}), and that 
is one motivation for 
the present work, in which we investigate 
relationships between Lie sphere geometry and 
the singularity theory of surfaces.  \red{In 
particular, we obtain the following: 
\begin{itemize}
\item In Theorem \ref{thm:singcond}, 
we characterize certain corank $1$ singularities 
arising from non-umbilic points of Legendre 
immersions in Lie sphere geometry, 
those singularities being 
cuspidal edges, swallowtails, cuspidal lips, cuspidal beaks, 
cuspidal butterflies, and more 
degenerate singularities of certain types.
\item In Theorem \ref{thm:singcondu}, 
we characterize certain corank $2$ 
singularities arising from umbilic points 
of Legendre immersions, those being 
$D_4^\pm$ singularities.
\item We classify these singularities by certain differential 
properties, and in Theorem \ref{thm:singcondl} 
and Theorem \ref{lastthm} we show how Lie 
sphere transformations preserve these classes.  
\end{itemize}}
For simplicity of exposition we shall only consider 
singularities of surfaces in Euclidean 3-space, however, 
by employing analogous arguments as those used in this 
paper one can show that our results hold in any 
3-dimensional Riemannian spaceform.

\section{\red{Preliminaries on Lie sphere geometry}}
\label{sec:LiePrelims}

In this section we will explain the 
notions and terminologies from Lie sphere geometry that will 
be used in this paper.

Let $\R^{4,2}$ be a 6-dimensional vector space
equipped with the inner product $\inner{~}{~}=(-++++-)$. 
Let
\[ L^5=\{ X \in \R^{4,2} \, | \, 
(X,X)=0 \}\] 
denote the lightcone of $\R^{4,2}$. 

Choosing nonzero $p\in\R^{4,2}$ and $q\in\R^{4,2}$ with 
$p \perp q$ and $p$ non-null,
let us define the following $3$-dimensional quadrics
\begin{equation}
\label{eqn:spforms}
\begin{array}{rcl}
M &=& \{ X \in L^{5} \, | \, (X,p)=0, (X,q)=-1 \}\subset\R^{4,2},\\
N &=& \{ X \in L^{5} \, | \, (X,q)=0, (X,p)=-1 \}\subset\R^{4,2}.
\end{array}
\end{equation}
We call $p$ the {\it point sphere complex\/} 
and $q$ the {\it spaceform vector\/} of $M$. If $p$ is timelike 
(respectively, spacelike), then $M$ is isometric to a 3-dimensional 
Riemannian (respectively, Lorentzian) spaceform with constant 
sectional curvature $\kappa = -(q,q)$. For example, if $\inner{p}{p}=-1$ 
and  $q$ is null, then $M$ is isometric to
a Euclidean space $\R^3$. Cecil~\cite[Section 2.3]{cecil} made this 
identification explicit via the isometry 
\begin{equation}\label{eq:correuc}
\xi\in \R^{3} \mapsto \trans{\left(\frac{1+\innera{\xi}{\xi}}{2}, 
\frac{1-\innera{\xi}{\xi}}{2},\xi,0\right)}\in M,
\end{equation}
where $\innera{~}{~}$ denotes the inner product of $\R^{3}$. 
In this case $M$ is determined by $p = \trans(0,0,0,0,0,1)$ 
and $q=\trans(1,-1,0,0,0,0)$. 

Points in the projective light cone $\mathbb{P}(L^{5})$ 
of $\R^{4,2}$ 
correspond to spheres in spaceforms in the following way. 
Let $s\in\mathbb{P}(L^{5})$, then we let 
\[ S:= M\cap s^{\perp}.\]
If $s\not\perp p$, i.e., $(\sigma,p)\neq 0$ for any 
$\sigma\in L^{5}$ such that  $s=\span\{\sigma\}$, then 
the set of points determined by $s$ is a metric 
sphere or a plane in $M$. Otherwise $S$ is a point in $M$. 
For example, suppose we are using the identification of $M$ 
with $\R^{3}$ in  \eqref{eq:correuc}, where $p = \trans(0,0,0,0,0,1)$ 
and $q=\trans(1,-1,0,0,0,0)$. Write $\sigma\in L^{5}$ such that 
$s=\span\{\sigma\}$ as $\sigma =\trans(a,b,\zeta,c)$, 
where $a,b,c\in \R$ and $\zeta\in \R^{3}$. 
Then $\gamma=\trans{\left(\frac{1+\innera{\xi}{\xi}}{2}, 
\frac{1-\innera{\xi}{\xi}}{2},\xi,0\right)} \in S$ if and only if 
\begin{equation}\label{eq:spherereason}
0= (\gamma,\sigma)= -a\dfrac{1+\innera{\xi}{\xi}}{2}
+b\dfrac{1-\innera{\xi}{\xi}}{2}
+\innera{\zeta}{\xi} = \dfrac{1}{2}(b-a) - 
\dfrac{\innera{\xi}{\xi}}{2}(b+a) + \innera{\zeta}{\xi}.
\end{equation}
Furthermore, since $s\in \mathbb{P}(L^{5})$, 
\begin{equation}\label{eq:spherereason2}
0= -a^2+b^2+\innera{\zeta}{\zeta}-c^2= (a+b)(b-a) +\innera{\zeta}{\zeta}-c^2. 
\end{equation}
Now if $s\not\perp q$, i.e., $(\sigma, q)= a+b\neq 0$, then we may scale 
$\sigma$ so that $a+b=1$. Then \eqref{eq:spherereason} and 
\eqref{eq:spherereason2} imply that $\gamma \in S$ if and only if
\[ \innera{\xi - \zeta}{\xi - \zeta} = c^{2}.\]
Hence, $S$ is a sphere of radius $|c|$ with center $\zeta$. 
If $s\perp q$, i.e, $(\sigma,q)=a+b=0$, then we may scale 
$\sigma$ so that $c^{2}=1$. Then \eqref{eq:spherereason2} 
implies that $\zeta\in \R^{3}$ has unit length and one has 
that \eqref{eq:spherereason} is equivalent to 
\[  \innera{\zeta}{\xi} =  \dfrac{1}{2}(b-a).\]
Hence, $S$ defines a plane with unit normal $\zeta$. 
For a more in depth explanation, see \cite[Section 2.3]{cecil}. 

Now suppose that $s_{1},s_{2}\in \mathbb{P}(L^{5})$ and let 
$S_{i}:=M\cap s_{i}^{\perp}$ for $i\in\{1,2\}$ denote the 
corresponding spheres in $M$. Then $S_{1}$ and $S_{2}$ are 
in oriented tangential contact if and only if $s_{1}\perp s_{2}$. 
The maximal dimension of null subspaces of $\R^{4,2}$ is 2. 
Let $\mathcal{Z}$ denotes the Grassmannian of these null 2-dimensional 
subspaces. Alternatively, we can think of $\mathcal{Z}$ as 
the space of lines in $\mathbb{P}(L^{5})$. Then given 
$\mathcal{P}\in \mathcal{Z}$, any two elements $s_{1},s_{2}\in \mathcal{P}$ 
satisfy $s_{1}\perp s_{2}$ and thus $\mathcal{P}$ corresponds to 
a pencil of spheres in oriented tangential contact in any spaceform $M$. 
Thus $\mathcal{P}$ is referred to as a \textit{contact element}.
We will see that $\mathcal{Z}$ is a contact manifold in the
next section.

\subsection{Legendre immersions}
In this section we shall recall from (\cite[Section 4.1]{cecil}) 
how the notion of Legendre immersion from the perspective of Lie sphere
geometry coincides with its more well known analogue using the unit 
tangent bundle $T_{1}S^{3}$ of $S^{3}$. To achieve this, 
let $p =\trans(0,0,0,0,0,1)$ and $q=\trans(1,0,0,0,0,0)$. 
Then we have that both $M$ and $N$ are isometric to $S^{3}$ by the maps
\[ \zeta\in S^{3} \mapsto \overline{\zeta}=\trans(1,\zeta,0) \in M, 
\quad \xi \in S^{3}\mapsto \overline{\xi}=\trans(0,\xi,1)\in N.\]
Then a pair $(\zeta, \xi)\in S^{3}\times S^{3}$ takes values in $T_1S^3=
\{(\zeta,\xi)\in S^3\times S^3\,|\,\innera{\zeta}{\xi}=0\}$, 
where $\innera{~}{~}$ stands for
the Euclidean inner product of $\R^4$, if and only if 
$\span\{\overline{\zeta}, \overline{\xi}\}$ takes values in $\mathcal{Z}$. 
Thus we obtain a bijective map $C:T_1S^3\to\mathcal{Z}$,
and we derive
a differentiable structure on $\mathcal{Z}$ from $C$.
Since $T_1S^3$ has a standard contact structure,
we also derive a contact structure
on $\mathcal{Z}$ by $C$.

Let $\mathcal{F}$ be a map from a $2$-dimensional 
manifold $\Sigma$ to $\mathcal{Z}$.
Since $\mathcal{F}(x)$ is a $2$-dimensional null 
subspace of $\R^{4,2}$ for each $x\in \Sigma$,
we can think of $\mathcal{F}$ as a rank $2$ null subbundle of the 
trivial bundle $\Sigma\times \R^{4,2}$ over $\Sigma$.
Any independent pair of sections $\sigma_1$, 
$\sigma_2\in \Gamma\mathcal{F}$, 
such that  $\mathcal{F}=\span\{\sigma_{1},\sigma_{2}\}$, will 
be called a {\em 
generator\/} of $\mathcal{F}$. Let $F:= \mathcal{F}\cap M$ and 
$T:= \mathcal{F}\cap N$. Then according to our identifications 
we have $F=\trans(1,f,0)$ and $T=\trans(0,t,1)$, for some maps 
$f,t:\Sigma\to S^{3}$. 
Now $C^{-1}\circ \mathcal{F}=(f,t):\Sigma\to T_{1}S^{3}$ is said 
to be a Legendre immersion if the map is an immersion and $(f,t)$ 
is isotropic for the contact structure on $T_{1}S^3$, i.e., 
$\innera{df}{t}=0$. It is then clear that the isotropy condition 
is equivalent to $(dF,T)=0$. One can then easily deduce that this 
holds if and only if for any generators 
$\sigma_{1},\sigma_{2}\in\Gamma \mathcal{F}$  
\[ (d\sigma_{1},\sigma_{2})=0.\]
It remains to characterize the condition that 
$\mathcal{C}^{-1}\circ \mathcal{F}$ is an immersion:
\begin{lemma}
\label{lem:immcond}
The map\/
$\mathcal{C}^{-1}\circ\mathcal{F}:\Sigma\to\mathcal{Z}$
is an immersion if and only if \/
for all \/ $x\in\Sigma$
and\/ $X\in T_x\Sigma$, 
\begin{equation}\label{eq:immcond}
d_X\sigma\in \mathcal{F}(x)
\text{ for all sections\/ }
\sigma\in \Gamma
\mathcal{F}
\text{ implies\/ }
X=0,
\end{equation}
where\/
$d:\Gamma T\Sigma\times\Gamma(\Sigma\times\R^{4,2})
\to\Gamma(\Sigma\times\R^{4,2})$
is the flat connection of\/ $\Sigma\times\R^{4,2}$.
\end{lemma}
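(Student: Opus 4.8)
The plan is to unwind both the differentiable structure on $\mathcal{Z}$ and the membership condition \eqref{eq:immcond} in terms of the two adapted generators $F=\trans(1,f,0)$ and $T=\trans(0,t,1)$, and thereby reduce the statement to the elementary fact that $\mathcal{C}^{-1}\circ\mathcal{F}=(f,t)$ is an immersion precisely when $d_Xf=0$ and $d_Xt=0$ force $X=0$. Since the differentiable (and contact) structure on $\mathcal{Z}$ is by construction transported from $T_1S^3$ through the bijection $C$, being an immersion is tested on the pair $(f,t)$, so the whole point will be to translate the intrinsic condition on sections of $\mathcal{F}$ into the vanishing of $d_Xf$ and $d_Xt$.

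First I would observe that the condition ``$d_X\sigma\in\mathcal{F}(x)$ for all $\sigma\in\Gamma\mathcal{F}$'' need only be checked on a generating pair. Writing an arbitrary section as $\sigma=aF+bT$ with $a,b:\Sigma\to\R$ and applying the Leibniz rule for the flat connection, $d_X\sigma=(d_Xa)F(x)+(d_Xb)T(x)+a(x)\,d_XF+b(x)\,d_XT$; the first two terms automatically lie in $\mathcal{F}(x)=\span\{F(x),T(x)\}$, so $d_X\sigma\in\mathcal{F}(x)$ for every $\sigma$ if and only if $d_XF\in\mathcal{F}(x)$ and $d_XT\in\mathcal{F}(x)$. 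This also makes transparent that the condition is independent of the chosen generators, as it must be, since it is a statement about the subbundle $\mathcal{F}$ alone.

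Next I would compute the two generator derivatives explicitly. Because $d$ is the flat connection of the trivial bundle, $d_XF=\trans(0,d_Xf,0)$ and $d_XT=\trans(0,d_Xt,0)$. Imposing $d_XF=\alpha F(x)+\beta T(x)$ and comparing the first and last coordinates in $\trans(0,d_Xf,0)=\alpha\trans(1,f,0)+\beta\trans(0,t,1)$ forces $\alpha=\beta=0$, whence $d_XF\in\mathcal{F}(x)\iff d_Xf=0$; the identical argument gives $d_XT\in\mathcal{F}(x)\iff d_Xt=0$. Combining with the previous paragraph, the hypothesis of \eqref{eq:immcond} is equivalent to $d_Xf=0$ and $d_Xt=0$.

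Finally I would assemble the equivalence. If $\mathcal{C}^{-1}\circ\mathcal{F}=(f,t)$ is an immersion and $d_X\sigma\in\mathcal{F}(x)$ holds for all $\sigma$, then the preceding analysis gives $d_Xf=0$ and $d_Xt=0$, so $X\in\Ker d(f,t)_x=\{0\}$ and \eqref{eq:immcond} follows. Conversely, assuming \eqref{eq:immcond}, let $X\in\Ker d(f,t)_x$; then $d_Xf=d_Xt=0$, hence $d_XF,d_XT\in\mathcal{F}(x)$, and therefore $d_X\sigma\in\mathcal{F}(x)$ for every $\sigma$, which by \eqref{eq:immcond} forces $X=0$, so $(f,t)$ is an immersion. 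I expect no serious obstacle here: the only points requiring care are the well-definedness and generator-independence used in the first paragraph, together with keeping straight that ``immersion into $\mathcal{Z}$'' means by definition ``immersion of $(f,t)$ into $T_1S^3$'' under the differentiable structure induced by $C$.
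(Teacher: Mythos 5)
Your proposal is correct and follows essentially the same route as the paper: expand an arbitrary section in the generators $F=\trans(1,f,0)$, $T=\trans(0,t,1)$, use the Leibniz rule and the explicit coordinate forms to show that the hypothesis of \eqref{eq:immcond} is equivalent to $d_Xf=d_Xt=0$, and then invoke that $(f,t)$ is an immersion precisely when $\ker df\cap\ker dt=\{0\}$. The only cosmetic difference is that you first reduce the ``for all sections'' condition to the two generators and then test membership of $d_XF$, $d_XT$ in $\mathcal{F}(x)$, whereas the paper computes $d_X\sigma$ for a general section directly; the content is identical.
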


\begin{proof}
Since $\mathcal{F}=\span\{F,T\}$, we may write any 
$\sigma\in \Gamma \mathcal{F}$ as 
$$
\sigma
=
\alpha
\pmt{1\\f\\0}
+
\beta
\pmt{0\\ t\\1}
$$ for some choice of coefficient functions $\alpha,\beta$. 
Now for any $x\in \Sigma$ and $X\in T_{x}\Sigma$, 
\[ d_{X}\sigma= d_{X}\alpha \pmt{1\\f(x)\\0} + \alpha(x) 
\pmt{0\\d_{X}f\\0} + d_{X}\beta \pmt{0\\ t(x) \\1} + \beta(x)
\pmt{0\\d_{X}t\\0}.\]
Therefore, $d_{X}\sigma\in \mathcal{F}(x)$ if and only if  
$\alpha(x)d_{X}f +\beta(x)d_{X}t =0$. This holds for arbitrary 
$\sigma$, i.e., for arbitrary $\alpha$ and $\beta$, if and only if 
$d_{X}f =d_{X}t = 0$.

On the other hand the map $C^{-1}\circ \mathcal{F}=(f,t)$ is
an immersion if and only if
$$
\ker df\cap \ker dt=\{0\}.
$$
Namely, $d_X f=d_X t=0$ implies $X=0$.
\end{proof}

Motivated by this result, one says that 
$\mathcal{F}:\Sigma\to \mathcal{Z}$ is a 
\textit{Legendre immersion} 
(in the context of Lie sphere geometry) if the following 
two conditions hold
\begin{enumerate}
\item for any sections $\sigma_{1},\sigma_{2}\in \Gamma \mathcal{F}$, 
$(d\sigma_{1},\sigma_{2})=0$,
\item for all $x\in \Sigma$ and $X\in T_{x}\Sigma$, if 
$d_{X}\sigma\in \mathcal{F}(x)$ for all sections 
$\sigma\in \Gamma \mathcal{F}$, then $X=0$. 
\end{enumerate}

We have already seen in this subsection how one can identify 
$\mathcal{F}$ with maps into the spaceform $S^{3}$. However, 
suppose now that we have a general point sphere complex $p$ and 
a spaceform vector $q$ defining quadrics $M$ and 
$N$ as in~(\ref{eqn:spforms}). Assume that
\begin{equation}\label{eq:assump0}
\det B\neq0,\quad B:=\pmt{
\inner{\sigma_{1}}{p}&\inner{\sigma_{2}}{p}\\
\inner{\sigma_{1}}{q}&\inner{\sigma_{2}}{q}}
\end{equation}
for a generator $\{\sigma_{1},\sigma_{2}\}$ of $\mathcal{F}$.
Then by \eqref{eq:assump0}, we can obtain 
a map $F:\Sigma\to M$ as an intersection of 
$\mathcal{F}$ with $M$,
and
a map $T:\Sigma\to N$ as an intersection of 
$\mathcal{F}$ with $N$ as follows:
\begin{equation}\label{eq:projeceuc}
\begin{array}{ll}
\begin{array}{rcl}
F(x)&=&a(x)\sigma_{1}(x)+b(x)\sigma_{2}(x),\\
T(y)&=&c(x)\sigma_{1}(x)+d(x)\sigma_{2}(x),
\end{array}
\quad
\pmt{a(x)&c(x)\\b(x)&d(x)}
=
B^{-1}\pmt{0&-1\\-1&0}
\end{array}
\end{equation}
By \cite[Theorem 4.2]{cecil},
$F$ can be interpreted as a projection of 
$\mathcal{F}$ to $M$ by a Legendre fibration.
Thus $F$ is a front in the sense of \cite{AGV}.
One then has that, at each point $x\in \Sigma$, $\mathcal{F}(x)$ 
corresponds to the pencil of oriented spheres tangent to the surface 
$F$ at $x$. Furthermore, if one consider the sphere defined by 
$\span\{T(x)\}$, this corresponds to the unique totally 
geodesic ``plane" in the sphere pencil $\mathcal{F}(x)$. 
Thus, for a Euclidean projection, this simply corresponds to 
the tangent plane of the surface $F$ at $x$. Therefore, 
we shall refer to $T$ as the \textit{tangent plane congruence} of $F$.

Using the specific projection of Cecil~\cite[Section 2.3]{cecil} to 
Euclidean 3-space $\R^3$, 
where
$p={}^t(0,0,0,0,0,1)$ and $q={}^t(1,-1,0,0,0,0)$,
$F$ and $T$ have the form
\begin{equation}\label{eq:lifts}
F=\trans{\left(
\dfrac{1}{2}(1+\innera{f}{f}),
\dfrac{1}{2}(1-\innera{f}{f}),
f,0\right)},\quad
T=\trans{\big(\innera{f}{t},-\innera{f}{t},t,1\big)},
\end{equation}
where $f=(f_1,f_2,f_3)$, $t=(t_1,t_2,t_3)$ 
and $\innera{a}{b}=a_1b_1+a_2b_2+a_3b_3$ 
$(a=(a_1,a_2,a_3), b=(b_1,b_2,b_3)$).
We then obtain a smooth map $f=(f_1,f_2,f_3):\Sigma\to\R^3$
and its unit normal vector $t=(t_1,t_2,t_3)$.
Although $\mathcal{F}$ is a Legendre immersion, $f$ may have singularities.
Such a projection is a front, and 
the study of singularities of fronts has a long history,
see \cite{AGV,acous} for example.
It is known that generic singularities of fronts
are cuspidal edges and swallowtails.
Useful criteria for them are given in \cite{krsuy},
and using these criteria, singularities of surfaces which
have special curvature properties are characterized by
their geometric properties.  (See
\cite{fsuy,ishima,izdual,mandala,horoflat,
harm,krsuy,orien,luy,mu,nao,coh,max} for example.) 
It is natural to ask whether
the properties of singularities of the front $f$
can be characterized using the 
Lie-geometric properties of $\mathcal{F}$.
Thus the purpose of this note is to study
singularities of $f$ in the context of Lie sphere geometry.
As an application, we shall study singularities of
Lie sphere transformations
of a regular surface in $\R^3$, as 
noted in the introduction.
Since Lie sphere transformations include the transformations to 
parallel surfaces,
this generalizes the results in \cite{FH}.

\subsection{Curvature sphere congruences}
Suppose that we have a 2-dimensional manifold $\Sigma$ and 
a smooth map $s:\Sigma\to \mathbb{P}(L^{5})$. 
Then at each point $x\in \Sigma$, $s(x)$ corresponds to a sphere 
in any given spaceform. Therefore, $s$ defines a 2-dimensional 
family of spheres in such spaceforms, i.e., a sphere congruence. 
Alternatively, we may think of such a map as a rank 1 null subbundle 
of the trivial bundle $\Sigma\times \R^{4,2}$. Now, given a 
Legendre immersion $\mathcal{F}:\Sigma\to \mathcal{Z}$, 
one may consider the rank 1 subbundles $s\le \mathcal{F}$. 
These correspond to sphere congruences enveloped by the surface 
defined by $\mathcal{F}$ in any spaceform.

At a point $x\in \Sigma$, we say that a $1$-dimensional subspace $s(x)$ of 
$\mathcal{F}(x)$ is a {\it curvature sphere}\/
if there exists a subspace $T_x\subset T_x\Sigma$ 
such that 
\begin{equation}\label{eq:curvsp}
\text{for any } \sigma\in \Gamma\mathcal{F} \text{ satisfying }
\sigma(x)\in s(x), 
\text{ and for any } 
X_x\in T_x,\ 
d_{X_x}\sigma\in\mathcal{F}(x).
\end{equation}
If $s(x)$ is a curvature sphere, then
take a maximal subspace $T_{s(x)}$ such that
\eqref{eq:curvsp} holds, and call $T_{s(x)}$
{\it curvature space\/} of $s(x)$.
Cecil~\cite[Corollary 4.9]{cecil} showed that at each point 
$x\in \Sigma$ there are at most two distinct curvature spheres. 
Umbilic points are exactly the points where there is only one 
curvature sphere. In that case $T_{s(x)}= T_{x}\Sigma$. 
In the case that $\mathcal{F}$ is umbilic-free, the curvature 
spheres form two rank one subbundles $s_{1}$ and $s_{2}$ of $\mathcal{F}$ 
(called {\it curvature sphere congruences}\/) 
with respective rank one curvature subbundles 
$T_{1}:= \bigcup_{x\in \Sigma} T_{s_{1}(x)}$ and 
$T_{2}:= \bigcup_{x\in \Sigma} T_{s_{2}(x)}$ of the tangent 
bundle $T\Sigma$. Furthermore, 
$\mathcal{F}=s_{1}\oplus s_{2}$ and $T\Sigma = T_{1}\oplus T_{2}$.

Suppose now that we have a point sphere complex $p$ and a spaceform 
vector $q$ defining $M$ and $N$ as in~(\ref{eqn:spforms}). 
Let $F=\mathcal{F}\cap M$ denote the spaceform 
projection and $T=\mathcal{F}\cap N$ denote the tangent plane congruence 
of $\mathcal{F}$. Where $F$ is immersed, let $\kappa_{1}$ and 
$\kappa_{2}$ denote 
the principal curvatures of $F$. 
Then Cecil~\cite[Chapter 4]{cecil} showed that $T+\kappa_{i}F$ are 
lifts of the curvature sphere congruences $s_{i}$ of $\mathcal{F}$. 
This can be deduced by using Rodrigues equations in the spaceform $M$. 
For example, consider the lifts $F$ and $T$ of an umbilic-free surface 
$f:\Sigma\to \mathbb{R}^{3}$ in Euclidean space with unit normal 
$t:\Sigma\to S^{2}$ given in~(\ref{eq:lifts}). 
For $i\in\{1,2\}$ let $\partial_i$ be a
principal curvature direction in $T\Sigma$ 
corresponding to $\kappa_{i}$. Then by Rodrigues equations 
$\partial_{i}t+\kappa_{i}\partial_{i}f=0$. 
One can then check that this implies 
$\partial_i T + \kappa_i \partial_i F =0 $. Hence, 
\[ \partial_{i}(T+\kappa_{i}F) 
= \partial_{i}T+\partial_{i}\kappa_{i}\,F+\kappa_{i}\partial_{i}F 
= \partial_{i}\kappa_{i}\,F\in \Gamma \mathcal{F}.\]
Thus, one has that $s_{i}=\span\{T+\kappa_{i}F\}$ and 
$T_{i}=\span\{\partial_{i}\}$. 

It is clear from the lifts $T+\kappa_{i}F$ that umbilic 
points of $F$, i.e., points where the principal curvatures coincide, 
are exactly the umbilic points of $\mathcal{F}$, i.e., 
the points where $\mathcal{F}$ has only one curvature sphere.

It should come as no surprise that the sphere 
$S_{i}:=M\cap s_{i}^{\perp}$ corresponding to $s_{i}$ coincides 
with the classical notion of a curvature sphere, i.e., 
a sphere tangent to the surface $F$ with radius $\kappa_{i}^{-1}$. 
Therefore, for any point $x\in \Sigma$, $S_{i}(x)$ has second order 
contact with $F$ in the principal curvature direction $T_{i}(x)$. 
\subsection{M\"obius and Lie sphere transformations}

Let $\hat M$ denote $M$ union its 
(possible empty) ideal boundary $\partial M$.  
M\"obius transformations of $M$ are the diffeomorphisms from $\hat M$ 
to $\hat M$ that map spheres to 
spheres, and they 
are represented by pseudo-orthogonal transformations of $\mathbb{R}^{4,2}$ 
(i.e. transformations that preserve 
$\inner{~}{~}$, and 
$O(4,2)$ is a double cover of them) 
that fix $p$.  
M\"obius transformations of $M$ 
preserve the conformal structure, so will preserve the conformal 
structure of 
any surface inside $M$ as well.  Furthermore, 
M\"obius transformations 
will preserve contact orders of any spheres tangent to the 
surface, and so will preserve the principal curvature 
spheres. As a direct consequence, an 
umbilic point of the 
surface will remain an umbilic point 
after the M\"obius transformation is applied.

Lie sphere transformations are the transformations of spaceforms
that map spheres to spheres and preserve the oriented 
contact of spheres, and, like for 
M\"obius transformations, they are 
represented by matrices in 
$O(4,2)$, and 
the group of Lie sphere transformations is 
now isomorphic to all of 
$O(4,2)/\{ \pm I \}$.  M\"obius 
transformations are of course 
a special case of this.  The objects 
preserved by general Lie sphere 
transformations are oriented spheres, but 
not point spheres (unlike M\"obius 
transformations).  
However, contact elements are mapped to contact elements, 
so restricting to point spheres within the 
contact elements gives maps 
taking points to points.  From this latter 
point of view, all Lie sphere transformations 
of surfaces in $M$ are generated (by composition) 
from M\"obius transformations and parallel surface 
transformations in various spaceforms 
(see \cite{cecil}, Theorem 3.18).  
Like for M\"obius transformations, 
curvature spheres are preserved under Lie sphere transformations.

Rephrasing the above statements 
about Lie sphere transformations 
more precisely, 
given a Legendre immersion $\mathcal{F}:\Sigma\to \mathcal{Z}$, 
we have that $A\mathcal{F}$ is a Legendre immersion, for any 
$A\in O(4,2)$. 
Furthermore, if $s(x)$ is a curvature sphere of $\mathcal{F}$ at $x$ 
with curvature space $T_{s(x)}$, then $As(x)$ is a curvature sphere 
of $A\mathcal{F}$ at $x$ with curvature space $T_{As(x)}=T_{s(x)}$.  
With $p,q$ as chosen 
just before \eqref{eq:lifts}, 
let $f=(f_1,f_2,f_3):\Sigma\to \R^3$ be the 
regular surface in $\R^3$ with unit normal 
$t:\Sigma\to S^{2}$ given by $\mathcal{F}$.  
Continue on from \eqref{eq:assump0} and \eqref{eq:projeceuc},
assuming that for a 
generator $\{\sigma_1,\sigma_2\}$ of ${\cal F}$ 
(we could try for 
$\sigma_1=F,\sigma_2=T:\Sigma\to \R^{4,2}$ as defined 
in \eqref{eq:lifts}, for example) the matrix 
$$
B_A
=
\pmt{
\inner{A\sigma_1}{p}&\inner{A\sigma_2}{p}\\
\inner{A\sigma_1}{q}&\inner{A\sigma_2}{q}}\quad
\text{is regular},
$$
then
\begin{equation}\label{eq:liesptrdef}
\begin{array}{l}
\hat{F}
=
a A\sigma_1+ b A\sigma_2\\
\hat{T}
=
c A\sigma_1+ d A\sigma_2,
\end{array}
\quad
\pmt{a&c\\ b&d}
=
B_A^{-1}\pmt{0&-1\\-1&0}
\end{equation}
satisfy
$\inner{\hat{F}}{p}=0$,
$\inner{\hat{F}}{q}=-1$,
$\inner{\hat{T}}{p}=-1$ and
$\inner{\hat{T}}{q}=1$.
Thus $\hat{F}$, $\hat{T}$ have the form
\begin{equation}\label{eq:liesptrdefeuc}
\hat{F}=\trans{\left(
\dfrac{1}{2}\left(1+\innera{\hat f}{\hat f}\right),
\dfrac{1}{2}\left(1-\innera{\hat f}{\hat f}\right),
\hat f,0\right)},\quad
\hat{T}=
\trans{\left(\innera{\hat f}{\hat t},
-\innera{\hat f}{\hat t},\hat t,1\right)}.
\end{equation}
Hence we can
project $A{\cal F}=\{A\sigma_1,A\sigma_2\}$ to $M$ (isometric to $\R^3$) 
and $N$, and we obtain 
$
\hat f :\Sigma\to \R^3
$
and $
\hat t :\Sigma\to S^2
$.
We call $\hat{f}$ the {\it Lie sphere transformation of\/ $f$ by\/ $A$}.
The unit normal of $\hat{f}$ is $\hat{t}$.
\section{Conditions for singularities}
\subsection{Criteria for singularities}
Let $\Sigma \subset \R^2$ be an open domain and
$f:\Sigma \to\R^3$ a {\em frontal}, 
meaning there exists a unit normal vector field $t:\Sigma \to S^2$.
The function 
$
\det(f_u,f_v,t)
$
is called the {\em signed area density function\/}
with respect to the local coordinate system
$(\Sigma ;u,v)$ of $\Sigma$,
where $f_*=\partial f/\partial *$ for $*=u,v$. 
If the map $(f,t):\Sigma \to\R^3\times S^2$ is
an immersion, $f$ is called a {\em front}.
Let $S(f)$ be the set of singular points of $f$.
Take a point $x \in S(f)$ such that $\rank df_x=1$.
Then there exists a non-zero
vector field $X$ on a small neighborhood $V$ of $x$
such that $df_y(X)=0$ holds for 
$y\in S(f) \cap V$.
We call $X$ a {\em null vector field\/}
with respect to $f$.

Let $g_1,g_2:(\R^2,\zv)\to(\R^3,\zv)$
be two map-germs at the origin.
These map-germs are {\em ${\cal A}$-equivalent\/}
if there exist diffeomorphism-germs
$\Xi_s:(\R^2,\zv)\to(\R^2,\zv)$ and
$\Xi_t:(\R^3,\zv)\to(\R^3,\zv)$
satisfying $g_2\circ \Xi_s=\Xi_t\circ g_1$.
A map-germ at the origin $g:(\R^2,\zv)\to(\R^3,\zv)$
is a cuspidal edge, swallowtail, cuspidal beaks, cuspidal lips, 
cuspidal butterfly or $D_4^\pm$ singularity, respectively, 
if it is ${\cal A}$-equivalent to the map germ 
\begin{itemize}
\item $(u,v)\mapsto(u,v^2,v^3)$ (cuspidal edge),
\item $(u,v)\mapsto(u,4v^3+2uv,3v^4+uv^2)$ (swallowtail),
\item $(u,v)\mapsto(u,2v^3-u^2v,3v^4-u^2v^2)$ (cuspidal beaks),
\item $(u,v)\mapsto(u,2v^3+u^2v,3v^4+u^2v^2)$ (cuspidal lips),
\item $(u,v)\mapsto(u,5v^4+2uv,4v^5+uv^2)$ (cuspidal butterfly),
\item $(u,v)\mapsto(2 u v,\pm u^2+3 v^2, \pm 2 u^2 v+2 v^3)$ 
($D_4^\pm$ singularity)
\end{itemize}
at the origin.

We have the following well-known characterizations of
cuspidal edges and swallowtails \cite[Proposition 1.3]{krsuy}, 
see also \cite[Corollary 2.5]{suy3}.
Also there are characterizations of
cuspidal lips, cuspidal beaks and
cuspidal butterflies, 
see \cite[Theorem A.1]{horoflat} and 
\cite[Theorem 8.2]{mandala}.
Two function-germs, or map-germs, 
are {\em proportional\/} if they 
coincide up to non-zero scalar functional 
multiplication.
\begin{lemma}\label{lem:crit}
Let\/ $f$ be a front and\/ $x$ a singular point
for which\/ $df_x$ is rank\/ $1$.
Let\/ $\lambda$ be a function which is proportional
to the signed area density function, 
and\/ $X$ a null vector field in a neighborhood of\/ $x$.
Then\/ $f$ at\/ $x$ is a 
\begin{enumerate}
\item cuspidal edge if and only if\/
$d_{X}\lambda\ne0$ at\/ $x$,
\item swallowtail if and only if\/
$d_{X}\lambda=0$, $d_{X}d_{X}\lambda\ne0$ and\/
$d\lambda\ne0$ at\/ $x$.
\item cuspidal beaks if and only if\/
$d\lambda=0$, $d_{X}d_{X}\lambda\ne0$ and\/
$\det\Hess\lambda<0$ at\/ $x$,
\item cuspidal lips if and only if\/
$d\lambda=0$ and\/
$\det\Hess\lambda>0$ at\/ $x$,
\item cuspidal butterfly if and only if\/
$d_{X}\lambda=d_{X}d_{X}\lambda=0$, 
$d_{X}d_{X}d_{X}\lambda\ne0$ and\/
$d\lambda\ne0$ at\/ $x$.
\end{enumerate}
\end{lemma}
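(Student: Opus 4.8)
The plan is to reduce each of the five equivalences to a recognition problem against the explicit normal forms listed just before the lemma, using the singular set $S(f)=\lambda^{-1}(0)$ together with the null vector field $X$ as the organizing data. Throughout, the hypothesis that $f$ is a \emph{front} (so that $(f,t)\colon\Sigma\to\R^3\times S^2$ is an immersion) is what makes these normal forms attainable: the unit normal supplies the extra jet data that a mere frontal lacks, and it is precisely this that prevents the differential conditions on $\lambda$ from being satisfied by more degenerate behavior. I would first separate the cases according to whether $x$ is a regular or a critical point of $\lambda$, since items (1), (2), (5) occur at regular points ($d\lambda(x)\ne0$) while items (3), (4) occur at critical points ($d\lambda(x)=0$).

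For the cases $d\lambda(x)\ne0$ the singular set is a regular curve $\gamma$ through $x$, whose tangent at each point is $\ker d\lambda$, and the entire $\mathcal{A}$-type is governed by the order of contact between the null direction $X$ and $\gamma$. Transversality of $X$ to $\gamma$ is exactly $d_X\lambda(x)\ne0$ and yields a cuspidal edge; a first order tangency, $d_X\lambda(x)=0$ with $d_Xd_X\lambda(x)\ne0$, yields a swallowtail; and a second order tangency, $d_X\lambda(x)=d_Xd_X\lambda(x)=0$ with $d_Xd_Xd_X\lambda(x)\ne0$, yields a cuspidal butterfly. To prove sufficiency I would choose coordinates $(u,v)$ adapted so that $X=\partial_v$ along $\gamma$ and $\gamma$ is a coordinate curve, use the immersion $(f,t)$ to bring $f$ into a parametrized standard form (fixing the normal component), and then exhibit source and target diffeomorphism germs realizing the $\mathcal{A}$-equivalence to the respective germ. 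This is the content of \cite[Proposition 1.3]{krsuy} and \cite[Corollary 2.5]{suy3} for (1)--(2) and of \cite[Theorem 8.2]{mandala} for (5).

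For the degenerate cases (3), (4) we have $d\lambda(x)=0$, so $x$ is a critical point of $\lambda$, and I would require it to be a Morse critical point, $\det\Hess\lambda(x)\ne0$. The Morse lemma then describes the zero set of $\lambda$ near $x$: if $\det\Hess\lambda(x)>0$ the Hessian is definite and $x$ is an isolated singular point, producing a cuspidal lips; if $\det\Hess\lambda(x)<0$ the Hessian is indefinite and $S(f)$ is a pair of curves crossing transversally at $x$, and imposing the additional nondegeneracy $d_Xd_X\lambda(x)\ne0$ on the behavior of the null direction at the crossing produces a cuspidal beaks. Sufficiency again follows by reducing to the normal forms via adapted coordinates, as carried out in \cite[Theorem A.1]{horoflat}.

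The main obstacle is the recognition step common to all five cases, namely showing that the stated conditions on $\lambda$ are not merely necessary but sufficient for $\mathcal{A}$-equivalence to the normal form. This is where finite determinacy must be invoked: one checks that each normal form is $k$-determined for the appropriate $k$, and that the listed conditions pin down the $k$-jet of $f$ up to $\mathcal{A}$-equivalence, the front hypothesis being used decisively to control the jet in the normal direction. Necessity, by contrast, is the easier direction: the listed conditions are invariant under $\mathcal{A}$-equivalence (since $\lambda$ is multiplied by a nonvanishing function and $X$ transforms as a vector field under a source diffeomorphism), so they may simply be verified by direct computation on each normal form. Since the conditions in (1)--(5) are pairwise incompatible and exhaust the generic corank $1$ possibilities compatible with $d\lambda$ and its first few $X$-derivatives, combining sufficiency and this invariance yields the stated equivalences.
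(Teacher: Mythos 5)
Your proposal is correct and is essentially the paper's own treatment: the paper does not prove Lemma~\ref{lem:crit} at all, but states it as a well-known result, citing \cite[Proposition 1.3]{krsuy} and \cite[Corollary 2.5]{suy3} for cuspidal edges and swallowtails, and \cite[Theorem A.1]{horoflat}, \cite[Theorem 8.2]{mandala} for cuspidal lips, beaks and butterflies --- precisely the sources to which you defer the sufficiency/recognition steps. Your surrounding outline (splitting into $d\lambda\ne0$ versus Morse critical points of $\lambda$, with the $\mathcal{A}$-type governed by the contact order of the null direction with the singular curve, and invariance of the conditions giving necessity) is an accurate sketch of how those cited criteria are established, so there is nothing to correct.
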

\red{We shall now define three types of singularity. 
The motivation for defining these types 
will become clear in Section~\ref{sec:singliesp}, 
where we apply Lemma~\ref{lem:crit} to the 
study of Lie sphere transformations of surfaces.
Let $f$ be a front and $x$ a singular point such that $\rank df=1$.
Let $\lambda$ be a function which is proportional
to the signed area density function, 
and let $X$ be a null vector field in a neighborhood of $x$.
Then 
$x$ is called a {\it type\/ $1$ singularity\/}
if $d_X\lambda\ne0$.
Furthermore, 
$x$ is called a {\it type\/ $2$ singularity\/}
if $d_X\lambda=0$ and $d_Xd_X\lambda\ne0$ at $x$,
and is called a {\it type\/ $3$ singularity\/}
if $d_X\lambda=d_Xd_X\lambda=0$ and 
$d_Xd_Xd_X\lambda\ne0$ at $x$.
By the non-degeneracy, one can show that
these definitions do not depend on the choice
of $\lambda$ and $X$.
By Lemma \ref{lem:crit}, if $x$ is a type $1$ singularity,
then $x$ is a cuspidal edge.
If $x$ is a non-degenerate type $2$ (respectively, type $3$)
singularity, then $x$
is a swallowtail (respectively, cuspidal butterfly). 
Cuspidal beaks and cuspidal lips 
are examples of
type $2$ degenerate singularities.
}

In the case that $\rank df_x=0$,
there is the following characterization for 
$D_4^\pm$ singularities: 
\begin{lemma}\label{lem:d4} {\rm (\cite{d4})}
Let\/ $f$ be a front with unit normal\/ $t$ and 
let\/ $\lambda$ be a function which is proportional to the
signed area density function.  
A singular point\/ $x$ is a \/ $D_4^+$ 
$($respectively, $D_4^-)$ singularity 
if and only if 
the following two conditions hold:
\begin{enumerate}
\item $\rank df_x=0$.
\item $\det\Hess \lambda<0$ 
$($respectively, $\det\Hess \lambda>0)$ at\/ $x$.
\end{enumerate}
\end{lemma}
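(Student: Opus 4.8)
The plan is to prove the two implications separately, leaning throughout on two structural observations about a rank-zero singular point $x$ of a front $f$ with unit normal $t$. First, $\lambda=\det(f_u,f_v,t)$ is automatically critical there: since $f_u(x)=f_v(x)=0$, every term appearing in $\lambda_u$ and $\lambda_v$ carries a factor $f_u$ or $f_v$ and hence vanishes, so $d\lambda_x=0$ and $\Hess\lambda$ is a well-defined symmetric form at $x$, with entries reducing to $\lambda_{uu}=2\det(f_{uu},f_{uv},t)$, $\lambda_{uv}=\det(f_{uu},f_{vv},t)$, $\lambda_{vv}=2\det(f_{uv},f_{vv},t)$ at $x$. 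Second, because $f$ is a front the map $(f,t)$ is an immersion, so $\rank df_x=0$ forces $\rank dt_x=2$; that is, at a rank-zero singular point the unit normal $t$ is itself an immersion, hence a local diffeomorphism onto an open piece of $S^2$. These two facts drive both directions.

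Before computing on normal forms I would confirm that the conditions are genuinely invariant. The rank condition is plainly $\mathcal{A}$-invariant. For the sign of $\det\Hess\lambda$, note first that at the critical point the Hessian transforms by congruence under a source diffeomorphism and is merely rescaled by $\rho(x)$ under replacing $\lambda$ by a proportional $\rho\lambda$ (the correction terms all contain $d\lambda_x=0$), so in both cases $\det\Hess\lambda$ picks up a positive factor. The full invariance, including target changes, follows from the singular set: for a front $S(f)=\{\lambda=0\}$, which is an $\mathcal{A}$-invariant set-germ, and by the Morse lemma (applicable since $d\lambda_x=0$ and $\det\Hess\lambda\ne0$) this germ is two transverse curves when $\det\Hess\lambda<0$ and the isolated point $\{x\}$ when $\det\Hess\lambda>0$; as these germs are topologically distinct, the sign is an $\mathcal{A}$-invariant, so reading it off the normal forms is legitimate.

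Necessity is then a direct computation on $f=(2uv,\pm u^2+3v^2,\pm 2u^2v+2v^3)$. One finds $f_u\times f_v=4(u^2\mp3v^2)(u,\pm v,\mp1)$, so the smooth unit normal is $t=(u,\pm v,\mp1)/\sqrt{u^2+v^2+1}$ and hence $\lambda=\innera{f_u\times f_v}{t}=4(u^2\mp3v^2)\sqrt{u^2+v^2+1}$, a smooth function whose quadratic part at the origin is $4(u^2\mp3v^2)$. Thus $\Hess\lambda$ equals, up to a positive factor, $\diag(2,\mp6)$, so $\det\Hess\lambda$ is negative for $D_4^+$ and positive for $D_4^-$, exactly as asserted; the computation also exhibits $t$ as a local diffeomorphism at the origin, consistent with the structural observation above.

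The substance of the lemma is the converse, and this is where I expect the main obstacle. The idea is to exploit that $t$ is a local diffeomorphism to pass to coordinates adapted to the Gauss image: setting $h=\innera{f}{t}$, the tangency relation $\innera{df}{t}=0$ gives $dh=\innera{f}{dt}$, and since $dt$ is invertible this recovers $f$ from the single function $h$ as $f=h\,t+(\text{tangential part determined by }dh)$. Under this support-function correspondence the singular set and its order of vanishing are controlled by a Hessian-type operator built from $h$: a rank-zero singularity corresponds to that operator vanishing at $x$, and the $D_4^+$ and $D_4^-$ models are separated by the sign of the discriminant of the leading cubic in the Taylor expansion of $h$, which I would identify with the sign of $\det\Hess\lambda$ found above. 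Completing the argument then requires a recognition step: showing that these conditions determine the $\mathcal{A}$-orbit through the $3$-jet and that this jet is $\mathcal{A}$-equivalent to the relevant $D_4^\pm$ normal form. Carrying out this finite-determinacy and normal-form reduction while keeping every coordinate change within the class of fronts---rather than the comparatively routine necessity and invariance steps---is the hard part of the proof.
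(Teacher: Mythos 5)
There is nothing in the paper to compare against: the paper does not prove this lemma, it quotes it from the reference \cite{d4} (Saji, \emph{Criteria for $D_4$ singularities of wave fronts}, Tohoku Math.\ J.\ 63 (2011)). Judged on its own, your proposal is not a proof. Your preliminary observations are sound and your necessity computation is correct (I checked: $f_u\times f_v=4(u^2\mp3v^2)(u,\pm v,\mp 1)$, so $\lambda=4(u^2\mp 3v^2)\sqrt{u^2+v^2+1}$ and $\det\Hess\lambda\lessgtr 0$ as claimed), but the sufficiency direction --- rank $df_x=0$ together with $\det\Hess\lambda\neq 0$ of the stated sign forces the germ to be $\mathcal{A}$-equivalent to the $D_4^\pm$ normal form --- is exactly the content of the cited theorem, and you leave it as an outline: ``identify the sign of the discriminant of the leading cubic of the support function $h$ with the sign of $\det\Hess\lambda$'' and ``show that these conditions determine the $\mathcal{A}$-orbit through the $3$-jet.'' Neither step is carried out, and the second (a determinacy/recognition argument producing the normal form from the open condition on the cubic) is the hard part of Saji's paper. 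A criterion lemma whose recognition half is deferred is a restatement, not a proof.

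There is also a genuine hole in your invariance argument, which you need even for the necessity direction. From $f$ being $\mathcal{A}$-equivalent to the $D_4^+$ model you conclude that $S(f)$ is a germ of two transverse curves, and then infer $\det\Hess\lambda_f<0$; but the zero-set topology does not by itself rule out a \emph{degenerate} Hessian for $\lambda_f$ (for instance $(u^2-v^2)^2$ has two transverse curves as zero set and identically vanishing Hessian at the origin), so the Morse-lemma dichotomy cannot be applied to $\lambda_f$ until nondegeneracy of $\Hess\lambda_f$ is known to be invariant. The standard repair is to show how $\lambda$ itself transforms: if $g=\Xi_t\circ f\circ\Xi_s^{-1}$ with unit normal obtained by transporting $t$, then $\lambda_g=\rho\cdot(\lambda_f\circ\Xi_s^{-1})$ for a nonvanishing function $\rho$ (a short computation using $\det(A a,A b,c)=\det A\,\det(a,b,A^{-1}c)$ and the fact that $f_u,f_v\perp t$). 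At a rank-$0$ point, where $\lambda$ and $d\lambda$ vanish, this makes $\Hess\lambda$ transform by congruence up to a nonzero scalar, so nondegeneracy and the sign of the determinant are manifestly $\mathcal{A}$-invariant; your zero-set argument then becomes unnecessary.
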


Lemma~\ref{lem:crit} and Lemma~\ref{lem:d4} show how certain types 
of singularities are determined by any function proportional 
to the signed area density function. We will now derive such a 
function in the context of Lie sphere geometry. 
Using the 
lifts of (\ref{eq:lifts}), 
for any linearly independent vector
fields $X,Y\in \Gamma T\Sigma$, 
the determinant $\det(d_{X}F, d_{Y}F , T, F, q, p)$
is equal to the determinant of
\[
\begin{pmatrix} 
\innera{d_Xf}{f} & \innera{d_Yf}{f} & \innera{f}{t}  
&\frac{1}{2}(1+\innera{f}{f}) & 1 & 0\\
-\innera{d_Xf}{f} & -\innera{d_Yf}{f} & -\innera{f}{t} 
 &\frac{1}{2}(1-\innera{f}{f}) & -1 & 0\\
d_{X}f & d_{Y}f& t & f & 0&0\\
0 & 0 & 1 & 0 & 0 & 1
\end{pmatrix}.
\]
This is the same as 
\[  -\det(d_{X}f,d_{Y}f,t) \det \begin{pmatrix} 
\frac{1}{2}(1+\innera{f}{f})  & 1 & 0\\
\frac{1}{2}(1-\innera{f}{f}) & -1 & 0\\
0 & 0 & 1
\end{pmatrix}
= \det(d_{X}f,d_{Y}f,t).\]
Hence, the signed area density function 
can be taken to be 
\[ \lambda = \det (d_{X}F, d_{Y}F , T, F, q, p).\]
Now given two linearly independent sections 
$\sigma, \tilde{\sigma}\in \Gamma \mathcal{F}$, we may write 
\[ \sigma = -(\sigma,q)F -(\sigma,p)T \quad 
\text{and}\quad \tilde{\sigma} = -(\tilde{\sigma},q)F -(\tilde{\sigma},p)T.\]
Thus, 
\[ F = \frac{1}{\Delta}\big( (\sigma,p)\tilde{\sigma} 
- (\tilde{\sigma},p)\sigma\big),\]
where 
$\Delta:= (\tilde{\sigma},p)(\sigma,q) 
- (\sigma,p)(\tilde{\sigma},q)$. Hence, 
\[ dF = 
\frac{1}{\Delta}\big( (\sigma,p)\,d\tilde{\sigma} 
- (\tilde{\sigma},p)\,d\sigma\big)\, 
\ \operatorname{mod}\ \Omega^{1}(\mathcal{F})\]
and the signed area density function is proportional to 
\begin{equation} 
\label{eq:areafcn}
\det \Big((\sigma,p)d_{X}\tilde{\sigma} - (\tilde{\sigma},p)
d_{X}\sigma,\ (\sigma,p)d_{Y}\tilde{\sigma} 
- (\tilde{\sigma},p)d_{Y}\sigma,\ T,\ F,\ q,\ p\Big).
\end{equation}

\subsection{Calculations at a non-umbilic point}
Let $\mathcal{F}$ be a Legendre immersion,
and $F$ as in \eqref{eq:lifts}.
In this section,
we assume that $x$ is a non-umbilic point of $f$. 
Then, as stated in the introduction,  on a neighborhood of $x$ there 
are two distinct 
curvature sphere congruences $s_{1}$ and $s_{2}$ with 
curvature subbundles $T_{1}$ and $T_{2}$, respectively. 
We may then choose $\tilde{\sigma} = \sigma_{1}$ and 
$\sigma=\sigma_{2}$ in 
(\ref{eq:areafcn}) for any non-zero lifts 
$\sigma_{1}\in \Gamma s_{1}$ and 
$\sigma_{2}\in\Gamma s_{2}$. Furthermore, we take 
$X\in \Gamma T_{1}$ and $Y\in \Gamma T_{2}$. 
Then since 
$d_{X}\sigma_{1}, d_{Y}\sigma_{2}\in \Gamma \mathcal{F}$ holds,
(\ref{eq:areafcn}) becomes 
\begin{equation}
\label{eqn:detsgn}
 - (\sigma_{1},p)(\sigma_{2},p) \det 
( d_{X}\sigma_{2},d_{Y}\sigma_{1} , T, F, q, p).
\end{equation}
This gives rise to the following lemma:

\begin{lemma}
\label{lem:signprop}
The signed area density function\/ 
$\lambda$ 
of\/ $f$ as in\/ \eqref{eq:lifts}
is proportional to\/ $(\sigma_{1},p)(\sigma_{2},p)$ 
for any choice of non-zero lifts of the curvature spheres\/ 
$\sigma_{1}\in \Gamma s_{1}$ and\/ $\sigma_{2}\in \Gamma s_{2}$. 
\end{lemma}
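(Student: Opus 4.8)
The plan is to show that the determinantal factor appearing in \eqref{eqn:detsgn} never vanishes, so that the remaining factor $(\sigma_1,p)(\sigma_2,p)$ is proportional to $\lambda$. The computation leading to \eqref{eqn:detsgn} already exhibits $\lambda$ as proportional to $(\sigma_1,p)(\sigma_2,p)\,\det(d_X\sigma_2,d_Y\sigma_1,T,F,q,p)$, so the entire content of the lemma is the claim that the six vectors $d_X\sigma_2,\,d_Y\sigma_1,\,T,\,F,\,q,\,p$ are linearly independent at each non-umbilic point near $x$, equivalently that this determinant is a nowhere-zero function, which is precisely what proportionality demands. Note this is independent of the choice of nonzero lifts $\sigma_i\in\Gamma s_i$, since rescaling $\sigma_i$ by a nonvanishing function only rescales the determinant by a nonvanishing factor.

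First I would strip off $p$ and $q$ using the orthogonality built into a Legendre immersion. Because $(d\tau,\tau')=0$ for all sections $\tau,\tau'\in\Gamma\mathcal{F}$, both $d_X\sigma_2$ and $d_Y\sigma_1$ are orthogonal to $\mathcal{F}=\span\{F,T\}$. Writing a hypothetical linear relation $a\,d_X\sigma_2+b\,d_Y\sigma_1+c\,T+d\,F+e\,q+g\,p=\zv$ and pairing it successively with $F$ and with $T$, the relations $(F,q)=-1$, $(T,p)=-1$ and $(F,p)=(T,q)=0$ for the Euclidean $p,q$ of \eqref{eq:lifts} force $e=g=0$. The surviving relation reads $a\,d_X\sigma_2+b\,d_Y\sigma_1=-(c\,T+d\,F)\in\mathcal{F}$, so the problem reduces to the single assertion that $d_X\sigma_2$ and $d_Y\sigma_1$ are linearly independent modulo $\mathcal{F}$; granting that, $a=b=0$ and then $c=d=0$ since $F,T$ are independent.

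The crux, and the step I expect to be the main obstacle, is this independence modulo $\mathcal{F}$. Here I would use that $\mathcal{F}$ is a maximal isotropic plane of $\R^{4,2}$, so the quotient $\mathcal{F}^\perp/\mathcal{F}$ carries a positive definite inner product. For $\sigma\in\mathcal{F}(x)$ and $Z\in T_x\Sigma$ set $\beta_Z\sigma:=d_Z\sigma \bmod \mathcal{F}\in\mathcal{F}^\perp/\mathcal{F}$; this depends only on $\sigma(x)$, and $g_\sigma(Z,W):=(d_Z\sigma,d_W\sigma)=\langle\beta_Z\sigma,\beta_W\sigma\rangle$ is a symmetric positive semidefinite form on $T_x\Sigma$ whose null directions are exactly the $Z$ with $d_Z\sigma\in\mathcal{F}$. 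Since $d_Z\sigma_i\in\mathcal{F}$ holds precisely for $Z$ in the curvature space $T_i$, and at the non-umbilic point $x$ one has $X\notin T_2$, $Y\notin T_1$, the classes $d_X\sigma_2$ and $d_Y\sigma_1$ are nonzero modulo $\mathcal{F}$ and span the lines given by the images of the maps $Z\mapsto\beta_Z\sigma_2$ and $Z\mapsto\beta_Z\sigma_1$. If these two lines coincided in a common line $\ell$, then by linearity $\beta_Z\sigma\in\ell$ for every $\sigma\in\mathcal{F}(x)$ and every $Z$, whence every $g_\sigma$ would be degenerate; positive definiteness of $\mathcal{F}^\perp/\mathcal{F}$ would then yield, for each $\sigma$, a nonzero $Z$ with $d_Z\sigma\in\mathcal{F}$, making every line in $\mathcal{F}(x)$ a curvature sphere. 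This contradicts the bound of at most two curvature spheres at each point \cite[Corollary 4.9]{cecil}, and the contradiction establishes the independence.

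Assembling these, the determinant $\det(d_X\sigma_2,d_Y\sigma_1,T,F,q,p)$ is nowhere zero, and \eqref{eqn:detsgn} then gives the asserted proportionality of $\lambda$ with $(\sigma_1,p)(\sigma_2,p)$. The only genuinely geometric input sits in the third paragraph, namely the positive definiteness of $\mathcal{F}^\perp/\mathcal{F}$ combined with the ``at most two curvature spheres'' principle; everything else is linear algebra driven by the orthogonality relations of the Legendre immersion.
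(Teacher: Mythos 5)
Your proof is correct and follows the paper's skeleton: both reduce the lemma via \eqref{eqn:detsgn} to showing that $d_X\sigma_2$, $d_Y\sigma_1$, $T$, $F$, $q$, $p$ form a basis at each non-umbilic point, and both obtain the decisive independence from the bound on the number of curvature spheres; the differences are in how the two supporting steps are executed. To eliminate $q$ and $p$ you pair a putative linear relation with $F$ and $T$ and use the normalizations $(F,q)=(T,p)=-1$, $(F,p)=(T,q)=0$ of \eqref{eq:lifts}, whereas the paper quotes \eqref{eq:assump0} to get $\span\{q,p\}\cap\mathcal{F}^\perp=\{0\}$; these are equivalent in content, yours being more explicit and confined to the Euclidean lifts, which is all the lemma concerns. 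For the crux---independence of $d_X\sigma_2$ and $d_Y\sigma_1$ modulo $\mathcal{F}$---the paper has a shorter device: if $\lambda\, d_X\sigma_2+\mu\, d_Y\sigma_1\in\Gamma\mathcal{F}$, then since $d_X\sigma_1,\, d_Y\sigma_2\in\Gamma\mathcal{F}$ one gets $d_{\lambda X+\mu Y}(\sigma_1+\sigma_2)\in\Gamma\mathcal{F}$, so the single section $\sigma_1+\sigma_2$, which spans neither $s_1$ nor $s_2$, would be a third curvature sphere unless $\lambda=\mu=0$. You instead show that if the image lines of $Z\mapsto d_Z\sigma_1\bmod\mathcal{F}$ and $Z\mapsto d_Z\sigma_2\bmod\mathcal{F}$ coincided, then every line of $\mathcal{F}(x)$ would be a curvature sphere, contradicting Cecil's bound; this is valid, and it buys a clearer picture of the mechanism (dependence of the derivatives manufactures curvature spheres out of every section), but it is heavier than necessary---in particular the positive definiteness of $\mathcal{F}^\perp/\mathcal{F}$ is dispensable here, since once all classes $\beta_Z\sigma$ lie in a fixed line, rank-nullity applied to $Z\mapsto\beta_Z\sigma$ already yields the nonzero null direction, with no metric required. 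Either way the lemma follows, so yours is a correct proof that departs from the paper's only at the key independence step.
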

\begin{proof}
Following~(\ref{eqn:detsgn}), it is sufficient to show that  
$\{d_{X}\sigma_{2}, d_{Y}\sigma_{1}, T,F,q,p\}$ is a basis 
for $\Sigma\times \R^{4,2}$. Since $\mathcal{F}$ is an isotropic map, 
we have that $d_{X}\sigma_{2}, d_{Y}\sigma_{1}\in \Gamma\mathcal{F}^{\perp}$.
Now if there exist functions $\lambda$ and $\mu$ such that 
$\lambda d_{X}\sigma_{2}+ \mu d_{Y}\sigma_{1}\in \Gamma \mathcal{F}$, 
then $d_{\lambda X + \mu Y}(\sigma_{2}+\sigma_{1})\in \Gamma \mathcal{F}$. 
However, this implies that $\lambda = \mu =0$ as otherwise 
$\sigma_{2}+\sigma_{1}$ would span a curvature sphere congruence, 
contradicting that there are exactly two curvature sphere congruences 
$s_{1}$ and $s_{2}$. 
Thus, $\span\{d_{X}\sigma_{2}, d_{Y}\sigma_{1}\}\oplus \mathcal{F}$ 
is a rank $4$ subbundle of $\mathcal{F}^{\perp}$, i.e., 
\[  \span\{d_{X}\sigma_{2}, d_{Y}\sigma_{1}, T,F\} = \mathcal{F}^{\perp}.\]
By~(\ref{eq:assump0}), we have that 
$\span\{q,p\}\cap \mathcal{F}^{\perp} = \{0\}$ and the result follows. 
\end{proof}

It is clear from Lemma~\ref{lem:signprop} that if $\lambda(x)=0$ at 
a point $x\in \Sigma$, then $s_{1}(x)\perp p$ or $s_{2}(x)\perp p$. 
Note that since $p$ is timelike, we cannot have that both 
$s_{1}(x)\perp p$ and $s_{2}(x)\perp p$. 
Assume, without loss of generality,  that $s_{1}(x)\perp p$. 
Then by inertia, there exists an open neighborhood $V$ of $x$ 
such that $s_{2}(y)\not\perp p$, for all $y\in V$. Therefore, 
$y\in S(f)\cap V$ if and only if $s_{1}(y)\perp p$. 
This is equivalent to $F(y)\in s_{1}(y)$. In such an instance, 
since $d_{X_{y}}\sigma_{1}\in \mathcal{F}(y)$ for all 
$X\in \Gamma T_{1}$ and $\sigma_{1}\in \Gamma s_{1}$, 
we have that $d_{X_{y}}F=0$. Hence $d_{X_{y}}f=0$ and any 
$X\in \Gamma T_{1}$ locally yields a null vector field with respect to 
$f$.  We conclude that: 

\begin{lemma}
The point\/
$x$ is a singular point of\/ $f$ if and only if\/ 
$s_{1}(x)\perp p$ or $s_{2}(x)\perp p$. 
In which case, any\/ $X\in \Gamma T_{1}$ or\/ 
$Y\in \Gamma T_{2}$, respectively, 
locally yields a null vector field for\/ $f$. 
\end{lemma}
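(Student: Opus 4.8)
The plan is to reduce the statement entirely to the preceding lemmas, since most of the analytic work has already been done in Lemma~\ref{lem:signprop} and the discussion following it. First I would recall from Lemma~\ref{lem:signprop} that the signed area density function $\lambda$ is proportional to $(\sigma_1,p)(\sigma_2,p)$ for any choice of non-zero lifts $\sigma_1\in\Gamma s_1$ and $\sigma_2\in\Gamma s_2$ of the two curvature sphere congruences. Since $x$ is a singular point of $f$ precisely when $\lambda(x)=0$, and a product of two scalars vanishes exactly when one of the factors vanishes, I conclude that $x\in S(f)$ if and only if $(\sigma_1(x),p)=0$ or $(\sigma_2(x),p)=0$, which is the same as saying $s_1(x)\perp p$ or $s_2(x)\perp p$. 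This establishes the first assertion immediately, and the observation that $p$ being timelike prevents both $s_1(x)\perp p$ and $s_2(x)\perp p$ from holding simultaneously shows the two cases are mutually exclusive.

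Next I would address the claim about null vector fields. Assuming without loss of generality that $s_1(x)\perp p$, I would argue as in the paragraph preceding the statement: by inertia (continuity of the signature/non-nullity condition), there is an open neighborhood $V$ of $x$ on which $s_2(y)\not\perp p$, so on $V$ the singular set is characterized by $y\in S(f)\cap V \iff s_1(y)\perp p$. The condition $s_1(y)\perp p$ is equivalent to $F(y)\in s_1(y)^\perp\cap\mathcal{F}(y)$, and since $F$ is a lift into $\mathcal{F}$ with $(F,p)=0$ meaning $F\in s_1$ precisely when $s_1\perp p$, this identifies $F(y)$ with a lift of the curvature sphere $s_1(y)$ at singular points.

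The key mechanism for the null vector field is then the defining property of the curvature space. For $X\in\Gamma T_1$ and any $\sigma_1\in\Gamma s_1$, the curvature condition \eqref{eq:curvsp} gives $d_{X_y}\sigma_1\in\mathcal{F}(y)$. At a singular point $y$ where $s_1(y)\perp p$, the lift $F(y)$ itself spans $s_1(y)$, so I may take $\sigma_1$ with $\sigma_1(y)=F(y)$ and deduce $d_{X_y}F\in\mathcal{F}(y)$; projecting out the $f$-component via the explicit form \eqref{eq:lifts} and using $(d F,T)=0$ and the isotropy relations yields $d_{X_y}f=0$. Hence any $X\in\Gamma T_1$ restricts to a null vector field for $f$ along $S(f)\cap V$, and symmetrically any $Y\in\Gamma T_2$ works when $s_2(x)\perp p$.

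The main obstacle, such as it is, lies in making precise the passage from $d_{X_y}F\in\mathcal{F}(y)$ to $d_{X_y}f=0$: one must verify that the $\mathcal{F}(y)$-component does not contaminate the Euclidean $f$-coordinates. This requires using the structure of the lift $F$ in \eqref{eq:lifts} together with $\mathcal{F}=\span\{F,T\}$, noting that any element of $\mathcal{F}(y)$ is a linear combination $\alpha F(y)+\beta T(y)$, and that the middle ($\R^3$-valued) block of $d_{X_y}F$ is exactly $d_{X_y}f$; matching components forces this block to be a combination of $f(y)$ and $t(y)$, which combined with the tangency $\innera{d_{X_y}f}{t}=0$ and the normalization $\innera{t}{t}=1$ pins it to zero. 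This is the one spot demanding a short explicit computation rather than pure abstract nonsense, but it is routine given the formulas already in the excerpt, so no genuinely hard step remains.
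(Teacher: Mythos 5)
Your argument follows the paper's own proof step for step: Lemma~\ref{lem:signprop} gives the equivalence of singularity with $s_1(x)\perp p$ or $s_2(x)\perp p$, timelikeness of $p$ gives exclusivity of the two cases, inertia gives the neighborhood $V$ on which $s_2\not\perp p$, the identification of $s_1(y)\perp p$ with $F(y)\in s_1(y)$ lets you apply the curvature-space property \eqref{eq:curvsp} to the section $F$ itself, yielding $d_{X_y}F\in\mathcal{F}(y)$. All of this is exactly the paper's route (the paper states these steps in the paragraph preceding the lemma and concludes $d_{X_y}F=0$ without further comment).

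The one place you go beyond the paper --- making explicit why $d_{X_y}F\in\mathcal{F}(y)$ forces $d_{X_y}f=0$ --- is where your stated mechanism is flawed. Writing $d_{X_y}F=\alpha F(y)+\beta T(y)$, the middle block gives $d_{X_y}f=\alpha f(y)+\beta t(y)$, and the tangency $\innera{d_{X_y}f}{t}=0$ together with $\innera{t}{t}=1$ yields only the single relation $\alpha\innera{f}{t}+\beta=0$; even after the last coordinate of the lifts (which is $0$ for $F$ and $1$ for $T$) forces $\beta=0$, you are left with $\alpha\innera{f}{t}=0$, which does \emph{not} force $\alpha=0$ at points where $\innera{f(y)}{t(y)}=0$, i.e.\ where the tangent plane passes through the origin. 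So ``tangency plus normalization of $t$'' cannot be the pinning argument. The correct finish uses the normalizations of the lifts instead of the contact condition: the sum of the first two coordinates of $F$ is identically $1$ (and is $0$ for $T$), so differentiating gives $\alpha=0$; equivalently, differentiating $(F,p)=0$ and $(F,q)=-1$ shows $d_{X_y}F\perp p$ and $d_{X_y}F\perp q$, while $\mathcal{F}(y)\cap\span\{p,q\}^{\perp}=\{0\}$ by \eqref{eq:assump0}. Hence $d_{X_y}F=0$, and the middle block of \eqref{eq:lifts} then gives $d_{X_y}f=0$. With this one repair your proof is complete and structurally identical to the paper's.
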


{\bf Conditions for singularities.}
We will now state and prove our main theorem:
\begin{theorem}
\label{thm:singcond}
Let\/ $x\in \Sigma$ be a non-umbilic point of\/ $\mathcal{F}$ 
and let\/ $p$ be a point sphere complex. 
Suppose that\/ $s_{1}(x)$ is perpendicular to\/ $p$
$($and thus\/ $\mathcal{F}$ projects to a singular point
in any spaceform with point sphere complex\/ $p)$.
Let\/ $\sigma_{1}\in \Gamma s_{1}$ be any non-zero 
lift of\/ $s_{1}$, and let\/ $X\in \Gamma T_{1}$. Then 
\begin{enumerate}
\item\label{item:edge} $d_{X_{x}}\sigma_{1}\not\in s_{1}(x)$ 
if and only if\/ $\mathcal{F}$ 
projects to a cuspidal edge at\/ $x$, 

\item\label{item:slb} $d_{X_{x}}\sigma_{1}\in s_{1}(x)$,
$(d_{X}d_{X}\sigma_{1})_{x}\not\in s_{1}(x)$
\red{
if and only if\/ $\mathcal{F}$ projects to a type\/ $2$ singularity
at\/ $x$}.
Moreover, under this condition,
\begin{itemize}
\item $d\sigma_{1}\not\perp p$ at\/ $x$, 
if and only if\/ $\mathcal{F}$ projects to a swallowtail at\/ $x$,

\item $d\sigma_{1}\perp p$, and\/
$\det\Hess (\sigma_1,p) >0$ $($respectively, $\det\Hess (\sigma_1,p) <0)$
at\/ $x$ if and only if\/ 
$\mathcal{F}$ projects to a cuspidal lips 
$($respectively, cuspidal beaks\/$)$
at\/ $x$,
\end{itemize}
\item \label{item:cuspbutt}
$d_{X_{x}}\sigma_{1}, (d_{X}d_{X}\sigma_{1})_{x}\in s_{1}(x)$, 
$(d_{X}d_{X}d_{X}\sigma_{1})_{x}\not\in s_{1}(x)$
\red{
if and only if\/ $\mathcal{F}$ projects to a type\/ $3$ singularity\/ $x$}.
Moreover, under this condition,
$d\sigma_{1}\not\perp p$ at\/ $x$ if and only if\/ 
$\mathcal{F}$ projects to a cuspidal butterfly at\/ $x$,
\end{enumerate}
\end{theorem}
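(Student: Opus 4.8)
The plan is to reduce everything to the scalar function $g:=(\sigma_1,p)$ and its iterated derivatives along the null direction $X$, and then to read off the singularity type from Lemma~\ref{lem:crit} together with the definitions of type~$1$, $2$ and $3$ singularities. The starting point is Lemma~\ref{lem:signprop}, which gives $\lambda=c\,(\sigma_1,p)(\sigma_2,p)$ for a nowhere-zero function $c$. Since $p$ is timelike, $s_1(x)$ and $s_2(x)$ cannot both be perpendicular to $p$ (a null $2$-plane cannot lie inside the Lorentzian hyperplane $p^\perp$), so from $s_1(x)\perp p$ one gets $(\sigma_2(x),p)\neq0$, and by continuity $(\sigma_2,p)\neq0$ on a neighbourhood $V$ of $x$. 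Hence on $V$ one may write $\lambda=h\,g$ with $h:=c\,(\sigma_2,p)$ nowhere zero and $g=(\sigma_1,p)$ vanishing at $x$. Because $p$ is a constant vector, $d_X^k g=(d_X^k\sigma_1,p)$ for every $k$; this identity is the bridge between the analytic quantity $\lambda$ and the geometric conditions $(d_X^k\sigma_1)_x\in s_1(x)$ appearing in the statement.

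First I would establish the purely analytic Leibniz reduction: if $d_X^j g(x)=0$ for all $0\le j\le k-1$, then $d_X^k\lambda(x)=h(x)\,d_X^k g(x)$. This is immediate from $d_X^k(hg)=\sum_j\binom{k}{j}(d_X^j h)(d_X^{k-j}g)$, since every term with $j\ge1$ carries a factor $d_X^{k-j}g$ with $k-j\le k-1$ that vanishes at $x$. Consequently the order of vanishing of $\lambda$ along $X$ equals that of $g$, and $d_X^k\lambda(x)\neq0$ is equivalent to $(d_X^k\sigma_1,p)(x)\neq0$ whenever all lower $X$-derivatives vanish. In the same way, using only $g(x)=0$ one obtains $d\lambda(x)=h(x)\,(d\sigma_1,p)|_x$, and using $g(x)=0$ together with $dg(x)=0$ one obtains $\Hess\lambda(x)=h(x)\,\Hess(\sigma_1,p)(x)$, so that $\det\Hess\lambda(x)=h(x)^2\det\Hess(\sigma_1,p)(x)$ has the same sign as $\det\Hess(\sigma_1,p)(x)$.

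I expect the main obstacle to be the geometric translation: showing that, under the vanishing of the lower-order conditions, the vector $(d_X^k\sigma_1)_x$ actually lies in $\mathcal{F}(x)$, so that $(d_X^k\sigma_1)_x\in s_1(x)$ is detected precisely by $(d_X^k\sigma_1,p)(x)=0$. The difficulty is that $d_X\sigma_1\in\Gamma\mathcal{F}$ (the curvature-sphere property for $X\in\Gamma T_1$), but $d_X\sigma_2$ need \emph{not} lie in $\mathcal{F}$, so a priori higher derivatives escape $\mathcal{F}$. I would proceed by induction, writing $d_X\sigma_1=\phi\sigma_1+\psi\sigma_2$ and differentiating repeatedly: each differentiation produces terms in $\Gamma\mathcal{F}$ plus terms of the form $(\text{coefficient})\cdot d_X^m\sigma_2$, and one checks that every such escaping coefficient is $\psi$, $d_X\psi,\dots$ or a derivative thereof, all of which vanish at $x$ exactly under the hypotheses $(d_X^j\sigma_1)_x\in s_1(x)$ for $j<k$. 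Hence $(d_X^k\sigma_1)_x\in\mathcal{F}(x)$; its $\sigma_2$-component is read off by pairing with $p$ (because $(\sigma_1(x),p)=0$ while $(\sigma_2(x),p)\neq0$), and therefore $(d_X^k\sigma_1)_x\in s_1(x)\iff(d_X^k\sigma_1,p)(x)=0\iff d_X^k g(x)=0$.

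With these two ingredients the theorem assembles directly. For item~\ref{item:edge}, $d_{X_x}\sigma_1\notin s_1(x)$ is equivalent to $d_X\lambda(x)\neq0$, hence to a cuspidal edge by Lemma~\ref{lem:crit}(1); this is exactly a type~$1$ singularity. For item~\ref{item:slb}, the two conditions give $d_X\lambda(x)=0$ and $d_Xd_X\lambda(x)\neq0$, i.e.\ a type~$2$ singularity; within this case $d\sigma_1\not\perp p$ means $d\lambda(x)\neq0$, giving a swallowtail by Lemma~\ref{lem:crit}(2), whereas $d\sigma_1\perp p$ means $d\lambda(x)=0$, and the sign of $\det\Hess(\sigma_1,p)(x)$ equals that of $\det\Hess\lambda(x)$, giving cuspidal lips or cuspidal beaks by Lemma~\ref{lem:crit}(4),(3) (the required $d_Xd_X\lambda(x)\neq0$ being part of the type~$2$ hypothesis). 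For item~\ref{item:cuspbutt}, the three conditions give $d_X\lambda(x)=d_Xd_X\lambda(x)=0$ and $d_Xd_Xd_X\lambda(x)\neq0$, i.e.\ a type~$3$ singularity, and $d\sigma_1\not\perp p$ then yields a cuspidal butterfly by Lemma~\ref{lem:crit}(5). The only points needing a remark are that $X\in\Gamma T_1$ is a genuine null vector field (guaranteed by the lemma preceding the theorem) and that each distinguished condition is a strict inequality, so the listed equivalences hold without having to treat the degenerate borderline cases.
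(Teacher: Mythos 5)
Your proposal is correct and follows essentially the same route as the paper's proof: both rest on Lemma~\ref{lem:signprop}, the expansion $d_X\sigma_1=\alpha\sigma_1+\beta\sigma_2$ along the curvature direction to translate the conditions $(d_X^k\sigma_1)_x\in s_1(x)$ into the vanishing of $(d_X^k\sigma_1,p)$ at $x$, and a final application of Lemma~\ref{lem:crit}. Your only (cosmetic) deviation is absorbing the nowhere-vanishing factor $(\sigma_2,p)$ into a single function $h$ at the outset, which streamlines the Leibniz bookkeeping that the paper instead carries out term by term in its equations for $d_V\lambda$, $d_Wd_V\lambda$ and their higher analogues.
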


\begin{proof}
In this proof we will prove a series of facts equating conditions on 
$\lambda$ to conditions involving $s_{1}$ and the point sphere complex $p$. 
It is then straightforward to complete this proof by applying these 
facts to Lemma~\ref{lem:crit}. 

Let $\sigma_{2}$ be a non-zero lift of the curvature sphere $s_{2}$. 
Since $\sigma_{1}$ is a lift of the curvature sphere congruence $s_{1}$ 
and $X\in \Gamma T_{1}$, we have that 
\begin{equation} 
\label{eqn:alphabeta}
d_{X}\sigma_{1} = \alpha \sigma_{1} + \beta \sigma_{2},
\end{equation}
for some smooth functions $\alpha$ and $\beta$. We prove this theorem 
by utilizing the result of Lemma~\ref{lem:signprop} that we may replace 
the signed area density function in Lemma~\ref{lem:crit} by 
\[ \lambda = (\sigma_{1},p)(\sigma_{2},p),\]
for any non-zero lift of $\sigma_{2}\in \Gamma s_{2}$. 
By the Leibniz rule, 
we then have for any $V\in \Gamma T\Sigma$,  
\begin{equation} 
\label{eqn:dsign}
d_{V}\lambda = (d_{V}\sigma_{1},p)(\sigma_{2},p) 
+ (\sigma_{1},p)(d_{V}\sigma_{2},p).
\end{equation}
Since we assumed that $s_{1}(x)$ is perpendicular to $p$, we then have that 
\[ d_{V_{x}}\lambda = (d_{V_{x}}\sigma_{1},p)(\sigma_{2}(x),p).\] 
By the assumption in~(\ref{eq:assump0}) we have that 
$(\sigma_{2}(x),p)\neq 0$, thus $d_{V_{x}}\lambda =0$ 
if and only if $(d_{V_{x}}\sigma_{1},p)=0$. 
Hence, 
\begin{equation}
\label{eqn:dlambda}
\text{$d\lambda =0$ at $x$ if and only if $(d\sigma_{1},p)=0$ at $x$}.
\end{equation}
If we replace $V$ with $X$ then by~(\ref{eqn:alphabeta}), 
$(d_{X_{x}}\sigma_{1},p)=0$ if and only if $\beta(x)=0$. 
Therefore, we have shown that
\begin{equation}
\label{eqn:dXlambda}
\text{
$d_{X}\lambda = 0 $ at $x$ if and only if $d_{X_{x}}\sigma \in s_{1}(x)$.}
\end{equation} 

Differentiating~(\ref{eqn:dsign}) with respect to $W\in \Gamma T\Sigma$, 
we have that $d_{W}d_{V}\lambda$ is given by 
\begin{equation}
\label{eqn:ddsign}
(d_{W}d_{V}\sigma_{1},p)(\sigma_{2},p) 
+
 (d_{V}\sigma_{1},p)(d_{W}\sigma_{2},p)
+
 (d_{W}\sigma_{1},p)(d_{V}\sigma_{2},p) 
+ (\sigma_{1},p)(d_{W}d_{V}\sigma_{2},p).
\end{equation}
If we assume that $d_{X_{x}}\sigma_{1}\in s_{1}(x)$ then 
$(d_{X_{x}}\sigma_{1},p) = (\sigma_{1},p)=0$ and thus 
\[ (d_{X}d_{X}\lambda)_{x} 
= \big((d_{X}d_{X}\sigma_{1})_{x},p\big)(\sigma_{2},p).\]
Hence, $(d_{X}d_{X}\lambda)_{x}=0$ if and only if 
$\big((d_{X}d_{X}\sigma_{1})_{x},p\big)=0$. On the other hand, 
by~\eqref{eqn:alphabeta}, we have that 
\[ (d_{X}d_{X}\sigma_{1})_{x} = \big(d_{X_{x}}\alpha + \alpha^{2}(x)\big)\, 
\sigma_{1} + (d_{X_{x}}\beta)\,\sigma_{2}, \]
since $\beta(x)=0$. Therefore, $\big((d_{X}d_{X}\sigma_{1})_{x},p\big)=0$ 
if and only if $d_{X_{x}}\beta=0$, or equivalently, 
$(d_{X}d_{X}\sigma_{1})_{x}\in s_{1}(x)$. 
To summarize, we have shown that if $d_{X}\lambda =0$ at $x$ then 
\begin{equation}
\label{eqn:dXdXlambda}
 \text{$d_{X}d_{X}\lambda=0$ at $x$ if and only if 
$(d_{X}d_{X}\sigma_{1})_x \in s_{1}(x)$.}
\end{equation}
If we assume that $d\sigma_{1}\perp p$ at $x$, then by~(\ref{eqn:ddsign}),  
\[ (d_{W}d_{V}\lambda)_{x} 
= \big((d_{W}d_{V}\sigma_{1})_{x}, p\big)(\sigma_{2}(x),p).\]
Hence, 
\begin{equation}
\label{eqn:hesslambda}
(\det\Hess\lambda)_{x} = (\det\Hess(\sigma_{1},p))_{x} (\sigma_{2}(x),p)^{2},
\end{equation}
and thus $\det\Hess\lambda>0$ (respectively, $\det\Hess\lambda<0$) 
at $x$ if and only if $\det\Hess(\sigma_{1},p)>0$ (respectively, 
$\det\Hess(\sigma_{1},p)<0$) at $x$. 

By differentiating~(\ref{eqn:ddsign}) and using similar arguments as 
for the other cases, it is straightforward to prove that 
if $d_{X}\lambda =0$ and $d_{X}d_{X}\lambda=0$ at $x$ then 
\begin{equation}
\label{eqn:dXdXdXlambda}
\text{$d_{X}d_{X}d_{X}\lambda=0$ at $x$ if and only if 
$(d_{X}d_{X}d_{X}\sigma_{1})_x 
\in s_{1}(x)$.}
\end{equation} 

It is now straightforward to complete the proof of 
Theorem~\ref{thm:singcond} by applying the facts (\ref{eqn:dlambda}), 
(\ref{eqn:dXlambda}), (\ref{eqn:dXdXlambda}), (\ref{eqn:hesslambda}) 
and (\ref{eqn:dXdXdXlambda}) appropriately to Lemma~\ref{lem:crit}. 
\end{proof}



\subsection{Calculations at an umbilic point}

Suppose that $x$ is an umbilic point of $\mathcal{F}$, i.e., 
there exists $s(x)$ in 
$\mathcal{F}(x)$ such that for any section 
$\sigma\in \Gamma \mathcal{F}$ with $\sigma(x)\in s(x)$ we have that 
\[ (d\sigma)_{x} \in T_{x}\Sigma \otimes \mathcal{F}(x).\]
 Define 
\begin{equation}
\label{eqn:cubform}
\mathcal{C}(X,Y,Z)(\sigma,\tilde{\sigma}):= 
(d_{X}d_{Y}d_{Z}\tilde{\sigma},\sigma),
\end{equation}
for $X,Y,Z\in \Gamma T\Sigma$ and 
$\sigma, \tilde{\sigma}\in \Gamma \mathcal{F}$ such that 
$\sigma(x)\in s(x)$. 

\begin{lemma}
The map\/
$\mathcal{C}$ is tensorial at\/ $x$ and 
is symmetric in\/ $X,Y,Z$ at\/ $x$. 
Furthermore, $\mathcal{C}_{x}(s(x),s(x))=0$. 
Hence, we may identify\/ $\mathcal{C}_{x}$ as an element of\/ 
$S^{3}(T^{*}_{x}\Sigma)\otimes s(x)\otimes f(x)/s(x)$.
\end{lemma}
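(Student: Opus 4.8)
The plan is to isolate two elementary identities coming from the hypotheses and then push everything through the Leibniz rule. The first is the Legendre (isotropy) condition, giving $(d_W\eta_1,\eta_2)=0$ for all $\eta_1,\eta_2\in\Gamma\mathcal{F}$ and all $W\in\Gamma T\Sigma$; note also $(\eta_1,\eta_2)\equiv 0$ since $\mathcal{F}$ is null. The second is the umbilic condition: if $\eta(x)\in s(x)$ then $d_W\eta|_x\in\mathcal{F}(x)$ for every $W\in T_x\Sigma$. From these I would first derive the single identity that drives the whole proof: for all $W',W''$ and all $\eta,\zeta\in\Gamma\mathcal{F}$,
\[
(d_{W'}d_{W''}\eta,\zeta)_x=0
\quad\text{whenever } \eta(x)\in s(x)\ \text{ or }\ \zeta(x)\in s(x).
\]
Indeed, differentiating the isotropy identity $(d_{W''}\eta,\zeta)=0$ along $W'$ gives $(d_{W'}d_{W''}\eta,\zeta)=-(d_{W''}\eta,d_{W'}\zeta)$; at $x$ one of the two factors lies in $\mathcal{F}(x)$ by the umbilic condition while the other is $\perp\mathcal{F}(x)$ by isotropy, so the product vanishes. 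I expect this step to be the crux, since it is the only place where the umbilic hypothesis is genuinely used; the subtlety is that the two disjunctive clauses correspond to putting the ``$s(x)$'' factor in complementary slots of the inner product.

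For tensoriality I would check $C^\infty(\Sigma)$-linearity separately in each slot at $x$. Linearity in $\sigma$ is immediate because the inner product is $C^\infty$-linear and $(h\sigma)(x)\in s(x)$. For the $\tilde\sigma$ slot, expanding $\mathcal{C}(X,Y,Z)(\sigma,g\tilde\sigma)$ by the Leibniz rule produces the desired term $g\,\mathcal{C}(X,Y,Z)(\sigma,\tilde\sigma)$ together with correction terms carrying one, two or three derivatives of $g$; these are pairings of $\sigma$ with, respectively, $\tilde\sigma$, a single $d\tilde\sigma$, and a double $dd\tilde\sigma$, which vanish by nullity, by isotropy, and by the key identity above (using $\sigma(x)\in s(x)$). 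The same cancellations handle the vector-field slots: the outermost direction is automatically linear since $d_{gX}=g\,d_X$, while replacing $Y$ or $Z$ by $gY$ or $gZ$ produces only correction terms of the form $(d\tilde\sigma,\sigma)$ or $(dd\tilde\sigma,\sigma)_x$, which vanish for the same reasons. Note that tensoriality uses only $\sigma(x)\in s(x)$.

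Once tensoriality is in hand, symmetry in $X,Y,Z$ follows by evaluating on commuting coordinate vector fields $\partial_i$: since $d$ is the flat connection on the trivial bundle, $d_{\partial_i}d_{\partial_j}d_{\partial_k}\tilde\sigma$ is just the componentwise third partial derivative, which is symmetric in $i,j,k$. For the vanishing $\mathcal{C}_x(s(x),s(x))=0$ I would integrate by parts twice via isotropy, writing
\[
(d_Xd_Yd_Z\tilde\sigma,\sigma)_x
=-(d_Yd_Z\tilde\sigma,d_X\sigma)_x
-(d_Xd_Z\tilde\sigma,d_Y\sigma)_x
-(d_Z\tilde\sigma,d_Xd_Y\sigma)_x.
\]
When both $\sigma(x),\tilde\sigma(x)\in s(x)$, each surviving term pairs a single derivative of $\sigma$ or $\tilde\sigma$ lying in $\mathcal{F}(x)$ (umbilic condition) against a double derivative of the other that is $\perp\mathcal{F}(x)$ (key identity, this time via the ``$\eta(x)\in s(x)$'' clause), so every term vanishes.

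Collecting these properties, $\mathcal{C}_x$ is a symmetric trilinear form in $(X,Y,Z)$, linear in $\sigma\in s(x)$, and linear in $\tilde\sigma$ but vanishing when $\tilde\sigma(x)\in s(x)$, hence factoring through $\mathcal{F}(x)/s(x)$. This exhibits $\mathcal{C}_x$ as the claimed element of $S^3(T_x^*\Sigma)\otimes s(x)^*\otimes(\mathcal{F}(x)/s(x))^*$, which we record, under the standard metric identifications of the one-dimensional factors, as $S^3(T_x^*\Sigma)\otimes s(x)\otimes\mathcal{F}(x)/s(x)$. The main obstacle throughout is organizational rather than conceptual: one must track precisely which of $\sigma,\tilde\sigma$ is required to lie in $s(x)$ for each cancellation, since tensoriality needs only $\sigma(x)\in s(x)$ whereas the final vanishing on $s(x)\times s(x)$ genuinely requires the umbilic condition for both arguments.
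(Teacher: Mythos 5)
Your proposal is correct and takes essentially the same approach as the paper: your key identity is exactly the paper's \eqref{eqn:dydz}, obtained in the same way from isotropy plus the umbilic condition; your double integration by parts is the paper's identity \eqref{eq:dxdydzsigma}; and the Leibniz-rule cancellations you use for tensoriality are the same computation the paper performs by writing an arbitrary section as $\alpha\tilde{\sigma}+\beta\sigma$. The remaining differences (checking symmetry on coordinate fields rather than via commutators together with \eqref{eqn:dydz}, and phrasing the target space as $S^{3}(T^{*}_{x}\Sigma)\otimes s(x)^{*}\otimes(\mathcal{F}(x)/s(x))^{*}$ before identifying the one-dimensional factors) are purely organizational.
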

\begin{proof}
Firstly, let $Y,Z\in \Gamma T\Sigma$. Then, using the Leibniz rule 
and that $\mathcal{F}$ is isotropic,
\[ (d_{Y}d_{Z}\tilde{\sigma},\sigma) 
= - (d_{Z}\tilde{\sigma},d_{Y}\sigma) 
=  (\tilde{\sigma},d_{Z}d_{Y}\sigma) = (\tilde{\sigma},d_{Y}d_{Z}\sigma) .
\]
Since $\sigma(x)\in s(x)$, 
$(d\sigma)_{x}\in T_{x}\Sigma\otimes \mathcal{F}(x)$ and thus 
$(d_{Z}\tilde{\sigma},d_{Y}\sigma)=0$ at $x$. Hence, 
\begin{equation}
\label{eqn:dydz}
((d_{Y}d_{Z}\tilde{\sigma})_{x},\sigma(x)) = 
(\tilde{\sigma}(x),(d_{Y}d_{Z}\sigma)_{x}) = 0.
\end{equation}
In particular, since $\tilde{\sigma}$ is arbitrary, 
we have that $(d_{Y}d_{Z}\sigma)_{x}\in (\mathcal{F}(x))^{\perp}$. 
Using the Leibniz rule and the fact that $\mathcal{F}$ 
is isotropic one can show that 
\begin{equation}\label{eq:dxdydzsigma}
(d_{X}d_{Y}d_{Z}\sigma,\sigma) = -(d_{X}d_{Y}\sigma,d_{Z}\sigma)
- (d_{Y}\sigma,d_{X}d_{Z}\sigma) - (d_{Y}d_{Z}\sigma,d_{X}\sigma).
\end{equation}
Therefore, since $(d\sigma)_{x}\in T_{x}\Sigma\otimes \mathcal{F}(x)$,
\begin{equation}
\label{eqn:dxdydz}
\big((d_{X}d_{Y}d_{Z}\sigma)_{x},\sigma(x)\big)=0.
\end{equation}
Now let us assume that $\sigma$ and $\tilde{\sigma}$ are everywhere 
linearly independent. Then an arbitrary section 
$\hat{\sigma}\in \Gamma \mathcal{F}$ may be written as 
$\hat{\sigma} = \alpha \tilde{\sigma}+\beta \sigma$ for some 
smooth functions $\alpha$ and $\beta$. By applying~(\ref{eqn:dydz}) 
and~(\ref{eqn:dxdydz}), we have that 
\[ \big((d_{X}d_{Y}d_{Z}\hat{\sigma})_{x}, \sigma(x)\big) 
= \alpha(x) \big((d_{X}d_{Y}d_{Z}\tilde{\sigma})_{x},\sigma(x)\big).\]
Hence, the value of $\mathcal{C}$ at $x$ only depends on 
$\sigma(x)$ and $\tilde{\sigma}(x)\, \operatorname{mod}\, s(x)$.

The tensorality of $\mathcal{C}$ in $X,Y,Z$ at 
$x$ follows from the linearity of 
the connection $d$ and~(\ref{eqn:dydz}). 
The symmetry of $\mathcal{C}$ at $x$ follows 
from~(\ref{eqn:dydz}) and the flatness of the trivial connection $d$. 
\end{proof}

Suppose that $c\in S^{3}(T_{x}^{*}\Sigma)$ is a cubic form. 
Then in terms of a basis $X,Y\in T_{x}\Sigma$, its discriminant is 
given by
\[ (c_{X,X,X}c_{X,Y,Y}- c_{X,Y,X}^{2})
(c_{Y,Y,Y}c_{X,Y,X}- c_{X,Y,Y}^{2}) 
- (c_{X,X,X}c_{Y,Y,Y}-c_{X,Y,X}c_{X,Y,Y})^{2}.\]
Now since $s(x)\otimes f(x)/s(x)$ is a line bundle, 
$\mathcal{C}_{x}$ may be viewed as a conformal class of cubic forms. 
Since this is a conformal class, the sign of the 
discriminant of any two non-zero elements coincides. 
We will refer to this as the \textit{sign of 
the discriminant of\/ $\mathcal{C}_{x}$}. 

Now suppose we are given a point sphere complex $p$ 
and spaceform 
vector $q$. 
Let $F:\Sigma\to M$ denote the spaceform projection 
and $T:\Sigma\to N$ denote the tangent plane 
congruence of $\mathcal{F}$. 
If $f$ is immersed at $x$ with principal 
curvature $\kappa=\kappa_1(x)=\kappa_2(x)\in\R$, 
then  Rodrigues' equations we have that 
$\sigma:= T+\kappa F$ satisfies 
$(d\sigma)_{x} \in T_{x}\Sigma\otimes \mathcal{F}(x)$. 
Let $\tilde{\sigma}:= F$. 
Then in terms of this choice of $\sigma$ 
and $\tilde{\sigma}$ we have that 
\begin{equation*}
\begin{array}{rcl}
\mathcal{C}_{x}(X,Y,Z)(T+\kappa F,F) 
&=&  (d_{X}d_{Y}d_{Z}F, T+\kappa F)\\
&=& (d_{X}d_{Y}d_{Z}F, T) +\kappa (d_{X}d_{Y}d_{Z}F, F),\\
&=& (d_{X}d_{Y}d_{Z}F, T) \\
&&-\kappa 
\big((d_{X}d_{Y}F,d_{Z}F) + (d_{X}d_{Z}F,d_{Y}F) + 
(d_{Y}d_{Z}F,d_{X}F)\big)
\end{array}
\end{equation*}
by \eqref{eq:dxdydzsigma},
noticing that $\kappa$ is a constant.
This coincides with the cubic form in Porteous \cite{porteous}. 
Hence, we obtain the following result:

\begin{proposition}\label{prop:umbilic}
Suppose that the sign of the discriminant of\/ $\mathcal{C}_{x}$ 
is positive\/ $($respec\-tively, negative\/$)$. 
Then, when the spaceform projection immerses, 
$\mathcal{F}$ projects to a elliptic\/ $($hyperbolic\/$)$ 
umbilic at\/ $x$. 
\end{proposition}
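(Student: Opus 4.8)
The plan is to reduce the statement to the classical theory of Porteous, exploiting the computation carried out just before the statement. When the spaceform projection immerses at $x$ with principal curvature $\kappa=\kappa_1(x)=\kappa_2(x)$, the lift $\sigma:=T+\kappa F$ (with $\kappa$ the constant value at $x$) satisfies the umbilic condition $(d\sigma)_x\in T_x\Sigma\otimes\mathcal{F}(x)$ by Rodrigues' equations, and with $\tilde{\sigma}:=F$ the cubic form $\mathcal{C}_x$ was shown to coincide with the cubic form of Porteous~\cite{porteous}. Since the elliptic/hyperbolic dichotomy for umbilics of a regular surface is read off precisely from the sign of the discriminant of this cubic, it remains only to confirm that the polynomial invariant designated above as the discriminant of $\mathcal{C}_x$ carries the sign that separates the two cases.

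First I would record that this sign is well defined. The only freedom in $\mathcal{C}_x$ is multiplication by a nonzero real scalar, coming from the choice of trivialization of the line $s(x)\otimes\mathcal{F}(x)/s(x)$; the displayed discriminant is homogeneous of degree four in the coefficients $c_{X,X,X},c_{X,X,Y},c_{X,Y,Y},c_{Y,Y,Y}$, so such a rescaling multiplies it by a positive factor and leaves its sign unchanged. This is exactly the assertion, made when $\mathcal{C}_x$ was identified with a conformal class of cubic forms, that the sign of the discriminant is an invariant of $\mathcal{C}_x$.

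Next I would match the sign convention on explicit models. Writing the associated binary cubic as $c_{X,X,X}u^3+3c_{X,X,Y}u^2v+3c_{X,Y,Y}uv^2+c_{Y,Y,Y}v^3$, I would evaluate the discriminant on $u^3-uv^2=u(u-v)(u+v)$, which has three distinct real linear factors, and on $u^3+uv^2=u(u^2+v^2)$, which has a single real linear factor, finding the discriminant positive in the first case and negative in the second. Three real directions at the umbilic is the elliptic ($D_4^-$-type) configuration and one real direction is the hyperbolic ($D_4^+$-type) configuration; invoking Porteous' classification of umbilics of immersed surfaces through this cubic then gives that a positive discriminant of $\mathcal{C}_x$ corresponds to an elliptic umbilic and a negative one to a hyperbolic umbilic, as claimed.

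The main obstacle I anticipate lies not in any single computation but in fixing conventions: one must confirm that, up to the permitted positive rescaling, the cubic attached to $\mathcal{C}_x$ is exactly the Porteous cubic whose roots encode the umbilic's ridge behaviour, and that the particular quartic invariant chosen as ``the discriminant'' indeed changes sign precisely between the three-real-root and one-real-root cases. Once these identifications are pinned down, the correspondence between the sign of the discriminant of $\mathcal{C}_x$ and the elliptic/hyperbolic type of the umbilic is immediate.
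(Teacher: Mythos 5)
Your proposal is correct and takes essentially the same route as the paper: the paper's entire proof is the computation immediately preceding the proposition, which uses Rodrigues' equations to show that $\sigma:=T+\kappa F$, $\tilde{\sigma}:=F$ turn $\mathcal{C}_{x}$ into Porteous's cubic form, and then invokes Porteous's classification of umbilics by the sign of that cubic's discriminant. Your additional verifications --- that the degree-four homogeneity of the displayed discriminant makes its sign well defined on the conformal class, and the model computations on $u^3-uv^2$ and $u^3+uv^2$ (which indeed give $+1/27$ and $-1/27$, matching elliptic/$D_4^-$ and hyperbolic/$D_4^+$) --- are accurate and merely make explicit the sign conventions the paper delegates to the citation of Porteous.
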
 

We will now examine the case that the spaceform projection does 
not immerse at $x$. Assume that $x\in \Sigma$ is an umbilic 
point of $\mathcal{F}$. Recall from~(\ref{eq:areafcn}) that 
given two linearly independent sections 
$\sigma, \tilde{\sigma}\in \Gamma \mathcal{F}$, 
we may assume that the signed area density function is proportional to 
\[ \det \Big(
(\sigma,p)d_{X}\tilde{\sigma} - (\tilde{\sigma},p)d_{X}\sigma,\ 
(\sigma,p)d_{Y}\tilde{\sigma} - (\tilde{\sigma},p)d_{Y}\sigma,\ 
T,\ F,\ q,\ p\Big) \] 
for any linearly independent $X$ and $Y$ in $T\Sigma$.  
Now we may choose $\sigma$ and $\tilde{\sigma}$ such that 
$\sigma(x)\in s(x)$, $\tilde{\sigma}$ lies nowhere in a curvature 
sphere and  
\[ d\sigma= d\tilde{\sigma} \circ S,\]
where $S\in \Gamma \text{End}(T\Sigma)$. Now since $\tilde{\sigma}$ 
lies nowhere in a curvature sphere, we have that if 
$Z\in T_{x}\Sigma$ such that $d_{Z}\tilde{\sigma} \in \mathcal{F}(x)$, 
then $Z=0$.  On the other hand, since $\sigma(x)\in s(x)$, 
we have that $(d\sigma)_{x}\in T_x \Sigma \otimes 
\mathcal{F}(x)$. Therefore we must have that $S(x)=0$. Writing 
\[ S = \pmt{\alpha& \beta\\ \gamma &\delta}\]
in terms of the basis $X,Y$ of $T\Sigma$,
we may rewrite~(\ref{eq:areafcn}) as 
\[
\begin{array}{l}
\det \Big(\!
\big((\sigma,p)\!-\! \alpha(\tilde{\sigma},p)\big)d_{X}\tilde{\sigma}
 \!-\! \beta(\tilde{\sigma},p)d_{Y}\tilde{\sigma},\,
 \big((\sigma,p)\!-\!\delta(\tilde{\sigma},p)\big)d_{Y}\tilde{\sigma} 
 \!-\! \gamma (\tilde{\sigma},p)d_{X}\tilde{\sigma},\, 
 T, F, q, p\Big) \\[2mm]
= \Big(\big((\sigma,p)- \alpha(\tilde{\sigma},p)\big)
\big((\sigma,p)-\delta(\tilde{\sigma},p)\big) 
- \beta\gamma(\tilde{\sigma},p)^{2}\Big) 
\det(d_{X}\tilde{\sigma}, d_{Y}\tilde{\sigma}, T,F, q,p).
\end{array}\]
Since $\sigma$ is nowhere a curvature sphere, similar arguments as 
in the proof of Lemma~\ref{lem:signprop} show that 
$\{d_{X}\sigma,d_{Y}\sigma, T,F,q,p\}$ is a basis for 
$\Sigma\times \R^{4,2}$. Thus~(\ref{eq:areafcn}) is proportional to 
\[ \lambda = \big((\sigma,p)- \alpha(\tilde{\sigma},p)\big)
\big((\sigma,p)-\delta(\tilde{\sigma},p)\big) 
- \beta\gamma(\tilde{\sigma},p)^{2}.\]
At the umbilic point $x$, since $S(x)=0$, we have that 
$\lambda(x) = (\sigma(x),p)^{2}$. Hence, $\lambda(x)=0$ 
if and only if $s(x)\perp p$. So let us now assume that $s(x)\perp p$. 
Then  
\[
\begin{array}{rcl}
d_{X}\lambda &=& 
\big(\beta(d_{Y}\tilde{\sigma},p)- (d_{X}\alpha)(\tilde{\sigma},p)\big)
\big((\sigma,p)-\delta(\tilde{\sigma},p)\big) \\
&&\hspace{5mm}+ \big((\sigma,p)- \alpha(\tilde{\sigma},p)\big)
\big(\beta(d_{Y}\tilde{\sigma},p)
+ (\alpha-\delta)(d_{X}\tilde{\sigma},p) 
- (d_{X}\delta)(\tilde{\sigma},p)\big)\\
&&\hspace{10mm}-d_{X}(\beta\gamma)(\tilde{\sigma},p)^{2} 
-2\beta\gamma(d_{X}\tilde{\sigma},p)(\tilde{\sigma},p)\\
d_{Y}\lambda &=& (\gamma (d_{X}\tilde{\sigma},p) + 
(\delta - \alpha)(d_{Y}\tilde{\sigma},p) 
- (d_{Y}\alpha)(\tilde{\sigma},p)((\sigma,p)-\delta(\tilde{\sigma},p))\\
&&\hspace{5mm}+ ((\sigma,p)- \alpha(\tilde{\sigma},p))
( \gamma (d_{X}\tilde{\sigma},p) - (d_{Y}\delta)(\tilde{\sigma},p))\\
&&\hspace{10mm}-d_{Y}(\beta\gamma)(\tilde{\sigma},p)^{2} 
- 2\beta\gamma (d_{Y}\tilde{\sigma},p)(\tilde{\sigma},p).
\end{array}
\]
Using that $S(x)=0$ and $(\sigma,p)=0$ it is then clear that 
$(d\lambda)_{x}=0$. We shall now consider the Hessian of $\lambda$ at $x$. 
\[\begin{array}{rl}
(d_{X}d_{X}\lambda)_{x}&\!\!\!
= 2\big((d_{X_{x}}\alpha) (d_{X_{x}}\delta) 
- (d_{X_{x}}\beta)(d_{X_{x}}\gamma)\big)(\tilde{\sigma}(x),p)^{2}\\ 
(d_{X}d_{Y}\lambda)_{x} 
&\!\!\!
=
 \Big((d_{X_{x}}\alpha) (d_{Y_{x}}\delta) 
\!+\! (d_{Y_{x}}\alpha) (d_{X_{x}}\delta )
- (d_{Y_{x}}\beta) (d_{X_{x}}\gamma)
-(d_{Y_{x}}\gamma) (d_{X_{x}}\beta)\Big)(\tilde{\sigma}(x),p)^{2}\\
(d_{Y}d_{Y}\lambda)_{x}&\!\!\!= 2\big((d_{Y_{x}}\alpha)( d_{Y_{x}}\delta) 
- (d_{Y_{x}}\beta)(d_{Y_{x}}\gamma)\big)(\tilde{\sigma}(x),p)^{2}
\end{array}\]
On the other hand, by using these special lifts $\sigma$ 
and $\tilde{\sigma}$ one can compute the discriminant of 
$\mathcal{C}_{x}(\sigma,\tilde{\sigma})$ to be a positive scalar 
multiple of $(\det \Hess \lambda)_{x}$. Therefore we have 
arrived at the following theorem:

\begin{theorem}\label{thm:singcondu}
Let\/ $x$ be an umbilic point
of\/ $\mathcal{F}$ such that\/ $\lambda(x)=0$.
Then\/ $f$ has a\/ $D_4^+$ singularity\/
$($respectively, $D_4^-$ singularity\/$)$ at\/ $x$
if and only if the discriminant of\/ 
$\mathcal{C}_{x}$ is negative\/ $($respectively, positive\/$)$.
\end{theorem}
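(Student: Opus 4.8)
The plan is to read the theorem off Lemma~\ref{lem:d4}, whose two hypotheses are that $\rank df_x=0$ and that $\det\Hess\lambda$ has a definite sign at $x$. The analytic heavy lifting has already been carried out in the discussion preceding the statement: there a representative $\lambda$ of the signed area density is produced, $\lambda(x)=0$ is shown to be equivalent to $s(x)\perp p$, the Hessian of $\lambda$ at $x$ is computed in terms of the first derivatives of the entries $\alpha,\beta,\gamma,\delta$ of $S$, and the discriminant of the cubic form $\mathcal{C}_x(\sigma,\tilde\sigma)$ is identified as a positive scalar multiple of $(\det\Hess\lambda)_x$. Consequently the only steps remaining are to check that the rank hypothesis of Lemma~\ref{lem:d4} is automatic under our assumptions, and then to match the signs.

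First I would verify that $\rank df_x=0$. Since $x$ is umbilic and $\lambda(x)=0$, the preceding computation gives $s(x)\perp p$. The lift $F$ takes values in $M$, so $(F,p)\equiv 0$ and $(F,q)\equiv-1$; in particular $F(x)$ is a nonzero null vector of $\mathcal{F}(x)$ with $(F(x),p)=0$. By \eqref{eq:assump0} the map $\eta\mapsto\big((\eta,p),(\eta,q)\big)$ is an isomorphism on $\mathcal{F}(x)$, so the functional $\eta\mapsto(\eta,p)$ has a one-dimensional kernel in $\mathcal{F}(x)$; hence the null line perpendicular to $p$ is unique, and since both $s(x)$ and $F(x)$ lie in it we conclude $F(x)\in s(x)$. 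As $x$ is umbilic, the curvature space of $s(x)$ is all of $T_x\Sigma$, so $d_VF\in\mathcal{F}(x)$ for every $V\in T_x\Sigma$. Differentiating $(F,p)\equiv 0$ and $(F,q)\equiv-1$ yields $(d_VF,p)=(d_VF,q)=0$, and the same isomorphism then forces $d_VF=0$, i.e.\ $d_Vf=0$ for all $V$. Thus $\rank df_x=0$.

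Next I would assemble the equivalences. By the preceding discussion the discriminant of $\mathcal{C}_x(\sigma,\tilde\sigma)$ is a positive multiple of $(\det\Hess\lambda)_x$, so the two have the same sign; since $\mathcal{C}_x$ is a conformal class of cubic forms, this common sign is the well-defined sign of the discriminant of $\mathcal{C}_x$. Feeding this into Lemma~\ref{lem:d4}, with $\rank df_x=0$ already in hand, gives: $f$ has a $D_4^+$ singularity at $x$ if and only if $\det\Hess\lambda<0$ at $x$, if and only if the discriminant of $\mathcal{C}_x$ is negative; and $f$ has a $D_4^-$ singularity at $x$ if and only if $\det\Hess\lambda>0$, if and only if the discriminant of $\mathcal{C}_x$ is positive. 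This is exactly the claimed statement.

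The real obstacle is the computation compressed into the phrase ``one can compute the discriminant of $\mathcal{C}_x(\sigma,\tilde\sigma)$ to be a positive scalar multiple of $(\det\Hess\lambda)_x$''. To prove this from scratch I would exploit the special lifts, for which $S(x)=0$ and hence $(d\sigma)_x=0$, to rewrite the third derivatives $(d_Xd_Yd_Z\tilde\sigma,\sigma)_x$ defining $\mathcal{C}_x$ purely in terms of the first derivatives of $\alpha,\beta,\gamma,\delta$ at $x$, using $d\sigma=d\tilde\sigma\circ S$, the isotropy of $\mathcal{F}$, and the flatness of the trivial connection. One then checks that the coefficients of the resulting binary cubic match the Hessian entries of $\lambda$ up to the common factor $(\tilde\sigma(x),p)^2$, and that $(\tilde\sigma(x),p)\neq0$ because $\tilde\sigma$ is nowhere a curvature sphere, so $\tilde\sigma(x)\notin s(x)$. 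The delicate bookkeeping is then to confirm that the discriminant of this cubic equals the $2\times2$ determinant of the Hessian up to the positive factor $(\tilde\sigma(x),p)^4$, which is what pins down both the positivity of the scalar and the correct correspondence of signs.
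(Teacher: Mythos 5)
Your proposal is correct and takes essentially the same route as the paper: the paper's own proof consists precisely of the discussion preceding the theorem (the special lifts $\sigma,\tilde\sigma$ with $d\sigma=d\tilde\sigma\circ S$, the identity $\lambda(x)=(\sigma(x),p)^2$, the Hessian entries at $x$, and the asserted-but-not-expanded fact that the discriminant of $\mathcal{C}_x(\sigma,\tilde\sigma)$ is a positive scalar multiple of $(\det\Hess\lambda)_x$), fed into Lemma~\ref{lem:d4} with exactly your sign matching, so your deferral of that same computation puts you at no disadvantage relative to the paper. Your explicit verification that $\rank df_x=0$ --- via $F(x)\in s(x)$, the umbilic curvature space being all of $T_x\Sigma$, and the isomorphism $\eta\mapsto\big((\eta,p),(\eta,q)\big)$ on $\mathcal{F}(x)$ guaranteed by \eqref{eq:assump0} --- is correct and is a hypothesis of Lemma~\ref{lem:d4} that the paper leaves implicit.
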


\section{Singularities of Lie sphere transformations}
\label{sec:singliesp}

Let $\mathcal{F}:\Sigma\to\mathcal{Z}$ be a
Legendre immersion, and let $A\in O(4,2)$.
Let $f:\Sigma\to \R^3$ be the 
projection of $\mathcal{F}$ to $\R^3$ in the manner of
\eqref{eq:projeceuc} and \eqref{eq:lifts}, and assume that $f$ is 
immersed.
We described Lie sphere transformations 
$\hat f :\Sigma\to \R^3$ 
of $f$ in Section 
\ref{sec:LiePrelims}, \eqref{eq:liesptrdef}
and \eqref{eq:liesptrdefeuc}, and we shall now study 
singularities appearing on such transformations.

\subsection{Non-umbilic points}
Let $x$ be a non-umbilic point
of $f$ and assume that $\hat\lambda(x)=0$, 
where $\hat{\lambda}$ denotes the signed area density function of $\hat{f}$.
Let $\kappa_1$ and $\kappa_2$ be the principal curvatures of $f$.
Then by Theorem~\ref{thm:singcond}, we have the following:
\begin{theorem}\label{thm:singcondl}
Let\/ $(u,v)$ be curvature line coordinates for\/
$f$ and assume that\/ $\hat{f}_{u}=0$ at\/ $x$
$($thus\/ $x$ is a rank\/ $1$ singular point\/$)$. Then 
\begin{enumerate}
\item $\kappa_{1,u}\neq 0$ at\/ $x$ if and only if\/ 
$\hat{f}$ has a type\/ $1$ singularity at\/ $x$,
\item $\kappa_{1,u}=0$ and\/ $\kappa_{1,uu}\neq 0$ at\/ $x$ 
if and only if\/ $\hat{f}$ has a type\/ $2$ singularity at\/ $x$,
\item $\kappa_{1,u}=\kappa_{1,uu}=0$ and\/ $\kappa_{1,uuu}\neq 0$ 
at\/ $x$ if and only if\/ $\hat{f}$ has a type\/ $3$ singularity at\/ $x$.
\end{enumerate}
\end{theorem}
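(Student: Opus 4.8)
The plan is to apply Theorem~\ref{thm:singcond} to the transformed Legendre immersion $A\mathcal{F}$ and to translate its hypotheses, which are phrased in terms of successive derivatives of a lift of the relevant curvature sphere, into the derivative conditions on $\kappa_1$ that appear in the statement. Since Lie sphere transformations preserve curvature spheres together with their curvature spaces (Section~\ref{sec:LiePrelims}), the curvature sphere congruences of $A\mathcal{F}$ are $\hat s_i=As_i$ with unchanged curvature subbundles $T_{\hat s_i}=T_i$; in particular non-umbilicity is preserved, so $A\mathcal{F}$ genuinely has two distinct curvature spheres near $x$. Using $s_1=\span\{T+\kappa_1F\}$ and $T_1=\span\{\partial_u\}$, a natural nonzero lift of $\hat s_1$ is $\sigma_1:=A(T+\kappa_1F)$, and the relevant null vector field is $X=\partial_u$. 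The hypothesis $\hat f_u=0$ at $x$ is exactly the statement $\hat s_1(x)\perp p$, which is the situation to which Theorem~\ref{thm:singcond} applies.

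The computational core is the identity
\[
d_X\sigma_1=A\big(\partial_uT+\kappa_{1,u}F+\kappa_1\partial_uF\big)=\kappa_{1,u}\,AF,
\]
where the first and third terms cancel by the Rodrigues relation $\partial_uT+\kappa_1\partial_uF=0$ recorded in Section~\ref{sec:LiePrelims} (the cancellation survives $A$ because $A$ is linear and independent of $u$). Writing $g:=AF$, a short induction then gives
\[
d_X^{\,k}\sigma_1=(\partial_u^k\kappa_1)\,g+R_k,\qquad R_k=\sum_{j=1}^{k-1}c_{k,j}\,(\partial_u^j\kappa_1)\,\partial_u^{\,k-j}g,
\]
so every term of the remainder $R_k$ carries a factor $\partial_u^j\kappa_1$ with $1\le j<k$. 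Hence at a point where $\partial_u\kappa_1=\cdots=\partial_u^{k-1}\kappa_1=0$ one has $(d_X^{\,k}\sigma_1)_x=(\partial_u^k\kappa_1)(x)\,g(x)$.

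The only further geometric input is that $g(x)=AF(x)\notin\hat s_1(x)$, equivalently $F(x)\notin s_1(x)$: this holds because $F$ and $T$ are linearly independent lifts into the distinct quadrics $M$ and $N$, so $F$ is never a scalar multiple of $T+\kappa_1F$. Therefore, under the successive vanishing hypotheses, $(d_X^{\,k}\sigma_1)_x\in\hat s_1(x)$ if and only if $(\partial_u^k\kappa_1)(x)=0$. Substituting this equivalence for $k=1,2,3$ into the three cases of Theorem~\ref{thm:singcond} yields exactly the claimed dichotomies: $\kappa_{1,u}\neq0$ matches $d_X\sigma_1\notin\hat s_1$ and gives a type~$1$ singularity; $\kappa_{1,u}=0,\ \kappa_{1,uu}\neq0$ matches the type~$2$ condition; and $\kappa_{1,u}=\kappa_{1,uu}=0,\ \kappa_{1,uuu}\neq0$ matches the type~$3$ condition.

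I expect the main difficulty to be bookkeeping rather than anything conceptual: one must verify that the Rodrigues cancellation is exact, so that no stray $\partial_uF$ term contaminates $d_X\sigma_1$, and that in the induction every remainder term genuinely carries a factor of some lower $u$-derivative of $\kappa_1$ (so that $R_k$ vanishes at $x$ under the successive hypotheses). Checking $F(x)\notin s_1(x)$ and identifying $\partial_u$ as the correct null direction for $\hat f$ are both immediate, the latter following from the preservation of curvature subbundles together with $\hat f_u=0$.
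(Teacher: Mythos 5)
Your proposal is correct and takes essentially the same route as the paper: the paper's own (two-sentence) proof likewise observes that $A(T+\kappa_{1}F)$ is a lift of the curvature sphere $As_{1}$ of $A\mathcal{F}$ and then applies Theorem~\ref{thm:singcond}. Your Rodrigues-based computation $d_{X}\sigma_{1}=\kappa_{1,u}\,AF$, the Leibniz induction for higher derivatives, and the check that $AF(x)\notin As_{1}(x)$ are precisely the details the paper leaves implicit.
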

\begin{proof}
The theorem follows by noting that $T +\kappa_{1}F$ is a 
lift of the curvature sphere $s_{1}$ of $\mathcal{F}$. Thus,
$AT+\kappa_{1}AF$ is a lift of the curvature sphere $As_{1}$ 
of $A\mathcal{F}$. The result then follows by applying 
Theorem~\ref{thm:singcond} to this lift.
 \end{proof}

We remark that in~\cite{FH} and \cite{porteous} 
similar conditions were derived for singularities of parallel surfaces, 
however in that case the 
singularities are precisely determined, allowed for
by the fact that 
parallel transformations are a strict subgroup of the full
group of Lie sphere transformations.  Note that when taking 
parallel transformations of a regular surface there are 
at most two parallel surfaces with 
singularities at a given point.  However, in the case of Lie sphere 
transformations there are infinitely many such surfaces.  

\subsubsection{Varying subclasses using Lie sphere transformations} 

In Theorem~\ref{thm:singcondl} we showed that the type of 
singularities appearing on a Lie sphere transformation is 
determined by the initial surface $f$. Using 
Theorem~\ref{thm:singcond} we will now show that we can explicitly 
construct Lie sphere transformations of the initial surface 
that give each possible subclass of the determined class. 

To start with, we would obviously like our Lie sphere transformation 
$\hat{f}$ to have a singularity at $x$. To simplify matters we would 
also like the $u$ derivative of $\hat{f}$ to vanish at $x$. 
To achieve this we choose a unit timelike vector $\hat{p}\in \R^{4,2}$ 
such that $s_{1}(x)\perp \hat{p}$. There are many of these to choose 
from since $s_{1}(x)^{\perp}/s_{1}(x)$ has signature $(3,1)$. 
Now since $\hat{p}$ is a unit timelike vector there exists (many) 
$A\in O(4,2)$ such that $A\hat{p} = p$. If we then consider the 
Legendre immersion $A\mathcal{F}$, we have by the 
orthogonality of $A$ that 
$As_{1}(x)\perp A\hat{p}=p$, 
and thus $\hat{f}$ satisfies $\hat{f}_{u}=0$ at $x$. 

Immediately by Theorem~\ref{thm:singcondl}, we see that if our initial 
surface satisfies $\kappa_{1,u}\neq 0$ at $x$ then our Lie sphere 
transformation $\hat{f}$ has a cuspidal edge at $x$.  

For type $2$ and $3$ singularities, one can see from 
Theorem~\ref{thm:singcond} that we must impose some additional 
constraints on $\hat{p}$. If our initial surface satisfies 
$\kappa_{1,u}=0$ at $x$ then by considering the lift 
$\sigma_{1}=T+\kappa_{1}F$, it is clear that 
$(\sigma_{1,u})_{x}\in s_{1}(x)$. We then have that 
$W:=\span\{\sigma_{1}(x),d\sigma_{1}(T_{x}\Sigma)\}^{\perp}/s_{1}(x)$ 
has dimension $3$ and signature $(2,1)$. 
Thus we may choose a unit timelike vector $\hat{p}$, satisfying
\begin{equation}\label{eq:hatpw}
\hat{p}\, \operatorname{mod}\, s_{1}(x)\in W.
\end{equation}
This time by choosing $A\in O(4,2)$ such that $A\hat{p}=p$, 
we have that $A\mathcal{F}$ satisfies $As_{1}(x)\perp p$ and,
by \eqref{eq:hatpw},
$$
\inner{d(A\sigma_{1})_{x}}{p}
=
\inner{Ad(\sigma_{1})_{x}}{A\hat{p}}
=
\inner{d(\sigma_{1})_{x}}{\hat{p}}
=0,
$$
and so
\begin{equation}\label{eq:dsigmap}
d(A\sigma_{1})_{x}\perp p.
\end{equation}
Now if we are in the case that $\kappa_{1,u}=0$ and 
$\kappa_{1,uu}\neq 0$ at $x$, then by Theorem~\ref{thm:singcondl} 
we know that $\hat{f}$ has a type $2$ singularity at $x$.
Moreover, \eqref{eq:dsigmap} implies
that $\hat{f}$ has a degenerate singularity at $x$,
and by Theorem~\ref{thm:singcond}, we can see that $\hat{f}$ has 
either cuspidal lips, cuspidal beaks or some other less well known 
type $2$ degenerate 
singularity at $x$. In order to specify exactly which one of these
we have, let $\hat{p}^{\xi}:= \hat{p} + \xi \sigma_{1}$, 
for some $\xi \in\R$. Then $\hat{p}^{\xi}$ is again a unit timelike 
vector whose quotient lies in $W$ and, furthermore, one can check that 
\[ \Hess(\sigma_{1},\hat{p}^{\xi})_{x} 
= \Hess(\sigma_{1},\hat{p})_{x} + \xi c,\]
for some non-zero constant $c$. Therefore, by varying $\xi$, any 
sign of $\Hess(\sigma_{1},\hat{p}^{\xi})_{x}$ can be achieved. 
Now by letting $A^{\xi}\in O(4,2)$ such that $A^{\xi}\hat{p}^{\xi}=p$, 
we have by the orthogonality of $A^{\xi}$ that 
$\Hess(A^{\xi}\sigma_{1},p)_{x}= \Hess(\sigma_{1},p^{\xi})_{x}$. 
Then, by Theorem~\ref{thm:singcond}, by varying $\xi$ we change between 
cuspidal beaks, cuspidal lips and a less familiar type $2$ 
degenerate singularity at $x$. 

In the case that $\kappa_{1,u}=0$, $\kappa_{1,uu}=0$ and 
$\kappa_{1,uuu}\neq 0$ at $x$, we have by Theorem~\ref{thm:singcondl} 
that $\hat{f}$ projects to a type $3$ singularity at $x$, 
and by Theorem~\ref{thm:singcond} we have that $\hat{f}$ has a type 
$3$ degenerate singularity at $x$. 

On the other hand, in the case that $\kappa_{1,u}=0$, $\kappa_{1,uu}\neq 0$ 
(respectively, $\kappa_{1,u}=\kappa_{1,uu}=0$ and $\kappa_{1,uuu}\neq 0$) 
at $x$ if we wish to obtain a swallowtail (respectively, cuspidal butterfly) 
at $x$, the additional constraint on $\hat{p}$ we impose is that 
the quotient $\hat{p}\, \operatorname{mod}\, s_{1}(x)\not\in W$. 
Now by letting $A\in O(4,2)$ such that $A\hat{p}=p$, we have that 
$A\mathcal{F}$ satisfies $As_{1}(x)\perp p$ and 
$d(A\sigma_{1})_{x}\not\perp p$. Therefore, by Theorem~\ref{thm:singcond}, 
$\hat{f}$ has a swallowtail (respectively, cuspidal butterfly) at $x$.  

To illustrate the freedom given by Lie sphere transformations, 
we give the following example: 

\begin{example}\label{firstexample}
The surface in $\R^3$ given by 
\[ f(u,v)=(u,u^2,v) \] 
satisfies $k_{1,u}=0$ and $\kappa_{1,uu}\neq 0$ at $(u,v)=(0,0)$. 
Let $\hat{f}$ be a Lie sphere transformation of $f$ determined by 
$A\in O(4,2)$. Then by Theorem~\ref{thm:singcondl}, if $\hat{f}_{u}=0$ 
at $(0,0)$ then $\hat{f}$ must have a type $2$ singularity at $(0,0)$. 
For example, $\hat{f}$ is a swallowtail at $(0,0)$
if we use the Lie sphere transformation   
\[
A=\begin{pmatrix}
\tfrac{-1}{2} & 0 & 0 & 0 & \tfrac{-1}{2} & 1 \\
0 & 1 & 0 & 0 & 0 & 0 \\
0 & 0 & 1 & 0 & 0 & 0 \\
0 & 0 & 0 & 1 & 0 & 0 \\
\tfrac{1}{2} & 0 & 0 & 0 & \tfrac{-3}{2} & 1 \\
1 & 0 & 0 & 0 & -1 & 1 
\end{pmatrix}.
\]
Whereas, if we use, with $\chi = \sqrt{1+2 \xi^2}$, 
\[
A^{\xi}=\begin{pmatrix}
- \chi^{-1} (\tfrac{1}{\sqrt{2}}+ \xi) & 0 & 0 & 0 & 0 & \chi^{-1} 
(\tfrac{1}{\sqrt{2}} - \xi) \\
 \chi^{-1} \xi (-1 +\sqrt{2}\xi) & -\tfrac{\chi}{\sqrt{2}} & 0 & 
 -\tfrac{\chi}{\sqrt{2}} & 0 & -\chi^{-1} \xi (1+\sqrt{2} \xi) \\
0 & 0 & 1 & 0 & 0 & 0 \\
0 & \tfrac{1}{\sqrt{2}} & 0 & -\tfrac{1}{\sqrt{2}} & 0 & 0 \\
0 & 0 & 0 & 0 & 1 & 0 \\
\tfrac{1}{\sqrt{2}}-\xi & \xi & 0 & \xi & 0 & \tfrac{1}{\sqrt{2}} 
+ \xi
\end{pmatrix} \; , 
\]
we will have a cuspidal beaks (respectively, cuspidal lips) at $(0,0)$ if 
$\xi < 1/(2 \sqrt{2})$ ($\xi > 1/(2 \sqrt{2})$).  When 
$\xi=1/(2 \sqrt{2})$, we obtain a less familiar type $2$
degenerate singularity. Figure \ref{fig:Axi}
shows how the singularity changes type as $\xi$ varies.  
\end{example}

\begin{figure}[phbt]
\begin{center}
\begin{tabular}{ccc}
\includegraphics[width=0.3\linewidth,bb=0 0 295 432]{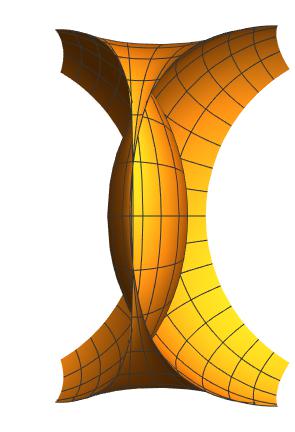} 
\quad\quad\quad&
\includegraphics[width=0.12\linewidth,bb=0 0 118 432]{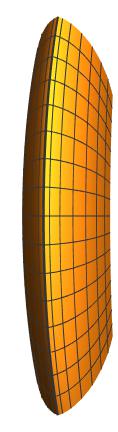} 
\quad\quad\quad&
\includegraphics[width=0.25\linewidth,bb=0 0 247 432]{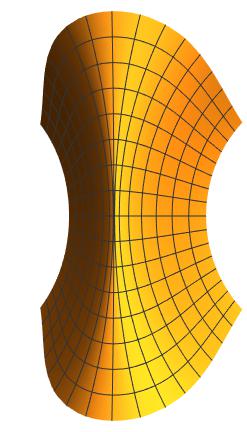} 
\end{tabular}
\caption
{From left to right, cuspidal beaks ($\xi < 1/(2 \sqrt{2})$), 
other type $2$ degenerate singularity ($\xi = 1/(2 \sqrt{2})$) and 
cuspidal lips ($\xi > 1/(2 \sqrt{2})$), as in Example \ref{firstexample}}
\label{fig:Axi}
\end{center}
\end{figure} 


\subsection{Umbilic case}
Now suppose that $x$ is an umbilic point of $\mathcal{F}$. 
Then for any Lie sphere transformation $A\in O(4,2)$, $x$ is 
an umbilic point of $A\mathcal{F}$. 
Let $\sigma, \tilde{\sigma}\in \Gamma \mathcal{F}$ such that 
$\sigma(x)\in s(x)$ and $\tilde{\sigma}(x)\not\in s(x)$. 
Then, if we let $\sigma^{A}:= A\sigma$ and 
$\tilde{\sigma}^{A}:= A\tilde{\sigma}$, 
we have that $\sigma^{A}(x) \in As(x)$ and 
$\tilde{\sigma}^{A}(x) \not\in As(x)$. 
We may then compute the cubic form 
$\mathcal{C}^{A}_{x}(\sigma^{A},\tilde{\sigma}^{A})$ 
for $A\mathcal{F}$ in terms of these lifts, but since 
$A$ is constant and orthogonal it is clear that 
$\mathcal{C}^{A}_{x}(\sigma^{A},\tilde{\sigma}^{A})
=\mathcal{C}_{x}(\sigma,\tilde{\sigma})$. 
By applying this to the spaceform projections 
$f:\Sigma\to \R^{3}$ and $\hat{f}:\Sigma\to \R^{3}$, 
we obtain the following theorem as a corollary of 
Proposition~\ref{prop:umbilic} and Theorem~\ref{thm:singcondu}: 

\begin{theorem}\label{lastthm}
Suppose that\/ $x$ is an elliptic\/ $($respectively, hyperbolic\/$)$ 
umbilic point of\/ $f$. 
If\/ $\hat{f}$ is immersed at\/ $x$ then\/ $x$ is an elliptic\/ 
$($respectively, hyperbolic\/$)$ umbilic point of\/ 
$\hat{f}$. 
Otherwise, $\hat{f}$ has a\/ $D^{-}_{4}$ $($respectively, 
$D^{+}_{4}$ singularity\/$)$ at\/ $x$. 
\end{theorem}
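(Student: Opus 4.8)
The plan is to obtain this statement as a corollary of the invariance of the cubic form $\mathcal{C}_x$ under Lie sphere transformations, combined with Proposition~\ref{prop:umbilic} and Theorem~\ref{thm:singcondu}. The entire argument hinges on a single invariance computation; once that is established, the elliptic/hyperbolic and immersed/singular dichotomies are simply read off from the sign of the discriminant of $\mathcal{C}_x$, which is the quantity controlling both results.

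First I would record that the umbilic structure is preserved. Since Lie sphere transformations carry curvature spheres to curvature spheres with the same curvature space, the unique curvature sphere $s(x)$ of $\mathcal{F}$ at $x$ is sent to the unique curvature sphere $As(x)$ of $A\mathcal{F}$ at $x$; hence $x$ is again an umbilic point of $A\mathcal{F}$, whose spaceform projection is $\hat f$. Choosing lifts $\sigma,\tilde{\sigma}\in\Gamma\mathcal{F}$ with $\sigma(x)\in s(x)$ and $\tilde{\sigma}(x)\notin s(x)$, and setting $\sigma^A=A\sigma$, $\tilde{\sigma}^A=A\tilde{\sigma}$, the fact that $A$ is constant (so it commutes with the flat connection $d$) and orthogonal (so it preserves $\inner{\cdot}{\cdot}$) yields $\mathcal{C}^A_x(\sigma^A,\tilde{\sigma}^A)=\mathcal{C}_x(\sigma,\tilde{\sigma})$, exactly as observed in the paragraph preceding the theorem. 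Since the relevant invariant of $\mathcal{C}_x$ is the sign of its discriminant, it follows that this sign is unchanged in passing from $\mathcal{F}$ to $A\mathcal{F}$.

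With this invariance in hand, the proof is a case split. By Proposition~\ref{prop:umbilic}, for the immersed surface $f$ an elliptic (respectively, hyperbolic) umbilic corresponds to the discriminant of $\mathcal{C}_x$ being positive (respectively, negative); the two cases are exhaustive once the discriminant is nonzero, so these are genuine equivalences. If $\hat f$ is immersed at $x$, then applying Proposition~\ref{prop:umbilic} to $A\mathcal{F}$ and using the invariance of the discriminant sign shows that $\hat f$ has an elliptic (respectively, hyperbolic) umbilic at $x$. If instead $\hat f$ is not immersed at $x$, then its signed area density vanishes there, which is precisely the hypothesis $\lambda(x)=0$ of Theorem~\ref{thm:singcondu} applied to $A\mathcal{F}$. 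That theorem then converts a positive discriminant into a $D_4^-$ singularity and a negative discriminant into a $D_4^+$ singularity, giving exactly the claimed $D_4^-$ (respectively, $D_4^+$) behaviour in the elliptic (respectively, hyperbolic) case.

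The only step requiring genuine care is the invariance identity $\mathcal{C}^A_x=\mathcal{C}_x$: one must check that the isomorphisms $s(x)\to As(x)$ and $f(x)/s(x)\to Af(x)/As(x)$ induced by $A$ are compatible with the identification of $\mathcal{C}_x$ as an element of $S^3(T_x^*\Sigma)\otimes s(x)\otimes f(x)/s(x)$, so that the comparison of discriminant signs across $\mathcal{F}$ and $A\mathcal{F}$ is legitimate. This is immediate because $A$ is a fixed linear isometry that commutes with $d$, but it is the one place where the bookkeeping must be carried out honestly; everything else is a direct invocation of Proposition~\ref{prop:umbilic} and Theorem~\ref{thm:singcondu}.
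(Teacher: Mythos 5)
Your proposal is correct and follows essentially the same route as the paper: the invariance $\mathcal{C}^{A}_{x}(\sigma^{A},\tilde{\sigma}^{A})=\mathcal{C}_{x}(\sigma,\tilde{\sigma})$ because $A$ is constant and orthogonal, followed by a case split reading off Proposition~\ref{prop:umbilic} when $\hat f$ immerses and Theorem~\ref{thm:singcondu} (with $\lambda(x)=0$) when it does not, with the signs matched exactly as in the paper. Your extra care about upgrading Proposition~\ref{prop:umbilic} to an equivalence and about the compatibility of the line-bundle identifications under $A$ is bookkeeping the paper leaves implicit, but it is the same argument.
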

This result extends the result of \cite{FH} for parallel 
transformations to the group of Lie sphere transformations.


\begin{flushright}
\begin{tabular}{ll}
\begin{tabular}{l}
(Pember)\\
Institut f\"ur Diskrete Mathematik \\
und Geometrie\\
Forschungsgruppe Differentialgeometrie \\
und Geometrische Strukturen\\
Technische Universit\"at Wien\\
Wiedner Hauptstra\ss e 8-10/104\\
A-1040 Wien, Austria\\
{\tt masonO\!\!\!ageometrie.tuwien.ac.at}\\
\end{tabular}
&
\begin{tabular}{l}
Department of Mathematics, \\
Kobe University,
Rokko 1-1, Nada, \\
Kobe 657-8501, Japan\\
(Rossman)\\
{\tt wayneO\!\!\!amath.kobe-u.ac.jp}\\
(Saji)\\
{\tt sajiO\!\!\!amath.kobe-u.ac.jp}\\
(Teramoto)\\
{\tt teramotoO\!\!\!amath.kobe-u.ac.jp}\\
\end{tabular}
\end{tabular}
\end{flushright}

\end{document}